\newtheorem{theorem}{Theorem}%[section]
\newtheorem{lem}[theorem]{Lemma}
\newtheorem{prop}[theorem]{Proposition}
\newtheorem*{theorem*}{Theorem} 
\theoremstyle{definition}
\theoremstyle{remark}
\newtheorem{remark}[theorem]{Remark}
\numberwithin{equation}{section}
\begin{document}

\title[Ill-posedness of the incompressible Euler equations in the $C^1$ space] {Local ill-posedness of the incompressible Euler equations in the $C^1$ space}

%    Information for first author
\author{Gerard Misio\l ek}
%    Address of record for the research reported here
\address{Department of Mathematics, University of Colorado, Boulder, CO 80309-0395, USA 
and 
Department of Mathematics, University of Notre Dame, IN 46556, USA} 
%    Current address
%\curraddr{Department of Mathematics, University of Colorado, Boulder, CO 80309-0395, USA} 
\email{gmisiole@nd.edu} 
%    \thanks will become a 1st page footnote.
%\thanks{The first author was supported in part by NSF Grant \#000000.}

%    Information for second author
\author{Tsuyoshi Yoneda}
\address{Department of Mathematics, Tokyo Institute of Technology, Meguro-ku, Tokyo 152-8551, Japan} 
\email{yoneda@math.titech.ac.jp}
\thanks{The second author was partially supported by JSPS KAKENHI Grant Number 25870004.} 

%    General info
\subjclass[2000]{Primary 35Q35; Secondary 35B30}

\date{\today} 

%\dedicatory{This paper is dedicated to our advisors.}

\keywords{Euler equations, ill-posedness, Lagrangian flow, norm inflation} 

\begin{abstract}
We prove that the 2D Euler equations are not locally well-posed in $C^1$. 
Our approach relies on the technique of Lagrangian deformations and norm inflation 
of Bourgain and Li. We show that the assumption that the data-to-solution map 
is continuous in $C^1$ leads to a contradiction with a well-posedness result in $W^{1,p}$ 
of Kato and Ponce. 
\end{abstract}

\maketitle

%%%%%%%%%%%%%%%%%%%%%%%%%%%%%%%%
\section{Introduction} 
\label{sec:Intro} 

There is an extensive literature dedicated to well-posedness of the Cauchy problem 
for the incompressible Euler equations of hydrodynamics. 
The first rigorous results on the local in time existence and uniqueness of solutions 
go back to the papers of Gyunter \cite{Gu} and Lichtenstein \cite{Li} in the late 1920's 
while the first global result was proved in 2D by Wolibner \cite{Wo} in 1933. 
Nevertheless, our understanding of the Cauchy problem remains incomplete 
especially in connection with the phenomenon of turbulence and persistence of smooth solutions in 3D 
for all time. 

Another important problem is to identify an optimal function space in which the Cauchy problem 
is locally well-posed. In this area substantial progress has been made in recent years. 
For example, Bardos and Titi \cite{BT} used the shear flow of DiPerna and Majda 
to construct solutions in 3D with an instantaneous loss of regularity in H\"{o}lder $C^\alpha$ 
and Zygmund $B^1_{\infty, \infty}$ spaces. More precisely, they found $C^\alpha$ initial data 
for which the corresponding (weak) solution does not belong to $C^\beta$ for any $1>\beta > \alpha^2$ 
and any $t>0$. This technique has also been used to obtain similar results 
in the Triebel-Lizorkin $F^1_{\infty, 2}$ space by Bardos, Lemarie and Titi 
and in the logarithmic Lipschitz spaces $\mathrm{logLip}^\alpha$ by the authors \cite{MY}.
More recently, Bourgain and Li \cite{BL} using a combination of Lagrangian and Eulerian techniques 
obtained strong local ill-posedness results in the Sobolev spaces $W^{n/p+1,p}$ for any $1<p<\infty$ 
and in the Besov spaces $B^{n/p+1}_{p,q}$ with $1<p<\infty$ and $1<q\leq\infty$ and $n=2$ or $3$. 
In particular, they settled the borderline Sobolev case $H^{n/2+1}$. 

However, as far as we are aware the problem of local well-posedness in the classical $C^1$ space 
in both space dimensions as well as other spaces such as $B^1_{\infty, q}$ with $1< q < \infty$ 
has remained open; cf. comments on criticality of the $C^1$ space in \cite{BT}; 
see also the papers of Pak and Park \cite{PP} and Takada \cite{Ta}. 
Our goal in this paper is to settle the former case in 2D by showing that the Euler equations are 
locally ill-posed in $C^1(\mathbb{R}^2)$. 

Recall that a Cauchy problem is locally well-posed in a Banach space $X$ (in the sense of Hadamard) 
if for any initial data in $X$ there exist $T>0$ and a unique solution which persists in the space $C([0,T), X)$ 
and which depends continuously on the data. Otherwise, the problem is said to be ill-posed. 
The Cauchy problem for the Euler equations in 2D is usually written in the form 
\begin{align} \label{eq:Euler-u} 
&u_t + u{\cdot}\nabla u + \nabla\pi = 0, 
\qquad 
t \geq 0, \, x \in \mathbb{R}^2 
\\ \label{eq:Euler-uu} 
&\mathrm{div}\, u = 0 
\\ \label{eq:Euler-u-ic} 
&u(0) = u_0, 
\end{align} 
where $u$ is the velocity vector field and $\pi$ is the pressure function of the fluid. 
Our approach is inspired by the methods of Bourgain and Li \cite{BL} who for suitable initial vorticity data 
constructed Lagrangian flows with large deformation gradients and used them to show that there exist 
nearby solutions which lose their regularity instantaneously in time through norm inflation. 
The initial data has an odd symmetry and a stagnation point at the origin. Such properties also seem to play 
an important role in a paper of Kiselev and \v{S}verak \cite{KS}. 
Additional information about other recent ill-posedness results can be found in both of these references. 

We mention in passing yet another manifestation of local ill-posedness 
that occurs for the Euler as well as the (supercritical) quasi-geostrophic equations 
in which certain initial data defined in the periodic case by lacunary series 
lead to solutions that fail to be continuous in time when considered as curves 
in the classical H\"{o}lder $C^{1+\alpha}$ spaces for $0<\alpha<1$; 
we refer to Cheskidov and Shvydkoy \cite{CS} and \cite{MY} for details. 

The main result of the paper can be succinctly stated as follows 
\begin{theorem} \label{thm:0} 
The 2D incompressible Euler equations \eqref{eq:Euler-u} are locally ill-posed in the space $C^1$. 
\end{theorem}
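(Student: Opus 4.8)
The plan is to argue by contradiction, following the Bourgain–Li strategy of combining Lagrangian deformation estimates with Eulerian norm inflation, and then using the known well-posedness theory in $W^{1,p}$ (Kato–Ponce) to derive the contradiction. So suppose the Cauchy problem \eqref{eq:Euler-u}–\eqref{eq:Euler-u-ic} were locally well-posed in $C^1(\mathbb{R}^2)$, meaning in particular that the data-to-solution map is continuous from (a neighborhood in) $C^1$ into $C([0,T),C^1)$ for some $T>0$. The first step is to fix a reference initial datum $u_0$ whose vorticity $\omega_0=\nabla^\perp\cdot u_0$ is smooth, compactly supported, and has the odd/double-odd symmetry (so that the origin is a stagnation point of the flow); for such data the $2$D solution is global and smooth, and one has good control of the Lagrangian flow map $\phi_t$ near the origin. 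The key geometric input — borrowed from \cite{BL} and ultimately traceable to the hyperbolic structure of the flow near a stagnation point — is that $\phi_t$ develops a large deformation gradient: there is a direction along which $\|D\phi_t\|$ grows (at least like $e^{ct}$, or can be made as large as one wishes on the relevant time interval) even though the $C^1$ norm of $u$ itself stays bounded.

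The second step is the construction of perturbed data exhibiting norm inflation. Starting from $u_0$ one adds a small, highly oscillatory, compactly supported perturbation of vorticity $\epsilon\,\widetilde\omega_0$, localized near the origin and chosen so that $\|u_0+\epsilon\widetilde u_0 - u_0\|_{C^1}$ is as small as we like, while the corresponding solution $\widetilde u(t)$ has the property that after being transported by (a flow close to) $\phi_t$, the high-frequency part is stretched anisotropically by the large deformation gradient. Because the flow stretches in one direction and compresses in the transverse one, the gradient $\nabla\widetilde u(t)$ acquires an amplification factor governed by $\|D\phi_t\|$, and one shows $\|\widetilde u(t)\|_{C^1}$ — or more precisely a suitable frequency-localized piece of it — becomes arbitrarily large at some small time $t\in(0,T)$, uniformly as the parameters are tuned, contradicting the assumed continuity of the data-to-solution map at $u_0$ in the $C^1$ topology. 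The role of the Kato–Ponce $W^{1,p}$ theory here is twofold: it guarantees that each perturbed datum (which lies in $W^{1,p}$ for every $p<\infty$, indeed is smooth) generates a genuine unique solution on a common time interval, so the norm-inflation comparison is meaningful; and the quantitative $W^{1,p}$ bounds (together with the embedding $W^{1,p}\hookrightarrow C^{1-2/p}$ and elliptic/Biot–Savart estimates) are what allow us to say the Lagrangian trajectories of the perturbed solutions stay close to $\phi_t$ long enough for the deformation estimate to bite.

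The third and final step assembles these into the contradiction: the continuity hypothesis forces $\|\widetilde u(t)-u(t)\|_{C^1}\to 0$ as $\epsilon\to 0$ uniformly on $[0,T)$, but the construction produces, for each small $\epsilon$, a time $t_\epsilon\in(0,T)$ at which $\|\widetilde u(t_\epsilon)\|_{C^1}$ exceeds any prescribed bound while $\|u(t_\epsilon)\|_{C^1}$ stays controlled — impossible. Hence the Euler equations cannot be locally well-posed in $C^1$, which is Theorem~\ref{thm:0}.

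I expect the main obstacle to be the second step — making the norm-inflation mechanism work in the $C^1$ (rather than $W^{1,p}$ or Besov) topology. In $C^1$ there is no Littlewood–Paley orthogonality to exploit, so one must track a single well-chosen frequency packet and control the interaction between the large Lagrangian deformation and the linear transport of that packet, while simultaneously keeping every error term — the difference between the true perturbed flow and the reference flow $\phi_t$, the pressure contribution, the spreading of the vorticity support — small in the sup-norm of the gradient. The delicate point is that $C^1$ is not contained in any $W^{1,p}$ with $p<\infty$ on $\mathbb{R}^2$, so the Kato–Ponce theory is used only on the (smooth, compactly supported) perturbations and must be matched carefully at the level of the base solution; threading this needle — enough regularity to run the Lagrangian estimates, but a weak enough final topology that the inflation is visible — is the crux of the argument.
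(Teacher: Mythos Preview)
Your proposal captures the broad architecture correctly—contradiction argument, odd-symmetric base vorticity with a hyperbolic stagnation point, large Lagrangian deformation $\|D\eta\|_\infty$, and a high-frequency perturbation transported by the flow—but there is a genuine gap in where you locate the contradiction, and a corresponding misidentification of the role of Kato--Ponce. You propose to inflate $\|\widetilde u(t)\|_{C^1}$ directly and contradict the assumed $C^1$ continuity of the solution map. The paper does \emph{not} do this, and for good reason: the Lagrangian stretching mechanism, applied to a transported oscillatory vorticity packet $\beta_n\circ\eta_n^{-1}$, naturally produces a \emph{lower} bound on $\|\nabla\omega_n(t_0)\|_{L^p}$ via the change-of-variables formula (volume preservation makes $L^p$ norms of compositions tractable). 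To inflate $\|\widetilde u(t)\|_{C^1}=\|\nabla u_n(t)\|_\infty$ you would instead need a pointwise lower bound on $R_{ij}\omega_n(t_0)$, i.e., on a singular integral of a stretched oscillatory function, and there is no clear route to that from the deformation estimate. The paper sidesteps this entirely: it inflates $\|\omega_n(t_0)\|_{W^{1,p}}$ and obtains the contradiction not with the continuity hypothesis itself but with the Kato--Ponce a priori bound $\sup_{[0,T]}\|\omega(t)\|_{W^{1,p}}\le K$ for initial data uniformly bounded in $W^{1,p}$.

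Relatedly, you have the role of Kato--Ponce backwards. You say it is used to guarantee existence of perturbed solutions and, via quantitative $W^{1,p}$ bounds, to keep the perturbed Lagrangian trajectories close to the reference flow. The first point is incidental (the data are smooth and compactly supported). The second is wrong: the closeness $\eta_n\to\eta$ in $C^1$ is deduced precisely from the \emph{assumed} $C^1$ continuity of the data-to-solution map (the perturbed initial velocities satisfy $\nabla^\perp\Delta^{-1}\beta_n\to 0$ in $C^1$, so the solutions converge in $C^1$, and then a standard ODE comparison lemma gives convergence of the flows). Kato--Ponce is not a tool feeding the construction; it is the theorem that the construction contradicts. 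Your closing paragraph correctly flags ``making the norm-inflation mechanism work in the $C^1$ topology'' as the main obstacle—the paper's resolution is exactly to refuse that fight and inflate in $W^{1,p}$ instead.
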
 

Before giving a more precise statement it will be convenient to use the vorticity formulation of 
the Euler equations. Recall that in two dimensions the vorticity of a vector field $u$ is a 2-form 
$\omega = d u^\flat$ which is identified with the function 
$$ 
\omega = \mathrm{rot}\, u = - \frac{\partial u_1}{\partial x_2} + \frac{\partial u_2}{\partial x_1}. 
$$ 
In this case the Cauchy problem \eqref{eq:Euler-u}-\eqref{eq:Euler-u-ic} can be rewritten as 
\begin{align} \label{eq:euler-v} 
&\omega_t + u{\cdot}\nabla \omega = 0, 
\qquad 
t \geq 0, \; x \in \mathbb{R}^2 
\\  \label{eq:euler-vic} 
&\omega(0) = \omega_0 
\end{align} 
where the velocity is recovered from $\omega$ using the Biot-Savart law 
\begin{equation} \label{eq:Biot-Savart} 
u = K \ast \omega = \nabla^\perp \Delta^{-1} \omega
\end{equation} 
with kernel $K(x) {=} (2\pi)^{-1}(-x_2/|x|^2, x_1/|x|^2)$ and where 
$\nabla^\perp {=} (-\frac{\partial}{\partial x_2}, \frac{\partial}{\partial x_1})$ 
denotes the symplectic gradient of a function. 

Our strategy will be the following. 
First, as in \cite{BL} we choose an initial vorticity $\omega_0$ such that 
the Lagrangian flow of the corresponding velocity field retains a large gradient 
on a (possibly short) time interval. 
We then perturb $\omega_0$ to get a sequence of initial vorticities in $W^{1,p}$. 
Finally, we show that 
\textit{the assumption that the Euler equations are well-posed in} $C^1(\mathbb{R}^2)$ 
(in particular, that its solutions depend continuously on the initial data in the $C^1$ norm) 
leads to a contradiction with a result of Kato and Ponce, which for convenience we restate 
in the following form 
\begin{theorem*}[Kato-Ponce \cite{KP}] 
Let $1{<}p{<}\infty$ and $s{>}1{+}\frac{2}{p}$. For any $\omega_0 \in W^{s-1, p}(\mathbb{R}^2)$ 
and any $T>0$ there exists a constant $K=K(T,\omega,s,p)>0$ such that 
$$
\sup_{0 \leq t \leq T}\| \omega(t)\|_{W^{s-1,p}} \leq K. 
$$ 
\end{theorem*}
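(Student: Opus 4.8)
The plan is to establish this global a priori bound by the classical energy method applied to the vorticity transport equation \eqref{eq:euler-v}, making essential use of the special structure of the two-dimensional problem. The key feature in 2D is that $\omega$ is simply transported by the Lagrangian flow of the divergence-free field $u$, so that every $L^q$ norm is conserved,
$$
\| \omega(t) \|_{L^q(\mathbb{R}^2)} = \| \omega_0 \|_{L^q(\mathbb{R}^2)}, \qquad 0 \le t \le T, \quad 1 \le q \le \infty,
$$
and in particular $\| \omega(t) \|_{L^\infty}$ and $\| \omega(t) \|_{L^p}$ remain bounded by their initial values for all time. It is this conservation of the low norms that ultimately prevents the higher Sobolev norm from blowing up in finite time.

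First I would derive a differential inequality for $y(t) := \| \omega(t) \|_{W^{s-1,p}}$. Applying the operator $\Lambda^{s-1} = (-\Delta)^{(s-1)/2}$ to \eqref{eq:euler-v}, testing against $|\Lambda^{s-1}\omega|^{p-2}\Lambda^{s-1}\omega$, integrating by parts in the advection term (using $\mathrm{div}\, u = 0$ to cancel the top-order contribution), and estimating the commutator $[\Lambda^{s-1}, u{\cdot}\nabla]\omega$ by the Kato--Ponce fractional Leibniz inequality together with the fact that $\nabla u$ is obtained from $\omega$ by a Calder\'on--Zygmund operator (so that $\| \nabla u \|_{W^{s-1,p}} \lesssim \| \omega \|_{W^{s-1,p}}$ for $1 < p < \infty$), one arrives at
$$
\frac{d}{dt}\, \| \omega(t) \|_{W^{s-1,p}} \lesssim \| \nabla u(t) \|_{L^\infty}\, \| \omega(t) \|_{W^{s-1,p}},
$$
with implied constant depending only on $s$ and $p$. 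The hypothesis $s - 1 > 2/p$ enters here through the embedding $W^{s-1,p}(\mathbb{R}^2) \hookrightarrow L^\infty$, which is what makes the product and commutator terms close in $L^p$.

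The main point, and the step I expect to be the principal obstacle, is to bound $\| \nabla u \|_{L^\infty}$ in a way that does not squander the gain coming from conservation of the low norms. Since $u = \nabla^\perp\Delta^{-1}\omega$ by \eqref{eq:Biot-Savart}, the gradient $\nabla u$ is given by a matrix of second-order Riesz-type singular integrals of $\omega$; these are bounded on $W^{s-1,p}$ but not on $L^\infty$, and the endpoint must be repaired by a logarithmic interpolation inequality of Brezis--Gallouet / Beale--Kato--Majda type,
$$
\| \nabla u \|_{L^\infty} \lesssim 1 + \| \omega \|_{L^p} + \| \omega \|_{L^\infty}\,\log\!\big(e + \| \omega \|_{W^{s-1,p}}\big),
$$
proved by decomposing the singular integral into near, intermediate, and far zones and optimizing the cut-off radii. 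Substituting this into the previous inequality and using that $\| \omega(t) \|_{L^p}$ and $\| \omega(t) \|_{L^\infty}$ are conserved, one obtains the closed differential inequality
$$
y'(t) \lesssim \big( 1 + \log(e + y(t)) \big)\, y(t), \qquad y(0) = \| \omega_0 \|_{W^{s-1,p}},
$$
whose right-hand side obeys the Osgood condition. Hence $y$ remains finite on every bounded time interval, with an (at worst) double-exponential bound $\sup_{0 \le t \le T} y(t) \le K(T, \omega_0, s, p)$, which is the assertion of the theorem.
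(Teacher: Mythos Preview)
The paper does not supply its own proof of this statement; the theorem is quoted as an external result from the reference \cite{KP} (Kato and Ponce, 1986), with the authors noting only that ``for convenience we restate [it] in the following form.'' There is therefore no in-paper proof to compare against.

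Your sketch is a faithful outline of the classical argument and is essentially the route taken in \cite{KP} itself: an energy/commutator estimate yielding $\frac{d}{dt}\|\omega\|_{W^{s-1,p}} \lesssim \|\nabla u\|_{L^\infty}\|\omega\|_{W^{s-1,p}}$, followed by the logarithmic endpoint bound $\|\nabla u\|_{L^\infty} \lesssim 1 + \|\omega\|_{L^p \cap L^\infty}\log(e+\|\omega\|_{W^{s-1,p}})$, closed by Osgood/Gronwall using the 2D conservation of $\|\omega\|_{L^p}$ and $\|\omega\|_{L^\infty}$ (the latter available since $s-1>2/p$ gives $\omega_0 \in L^\infty$ by embedding). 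No genuine gap.
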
 
Theorem \ref{thm:0} will be therefore a consequence of the following result
\begin{theorem} \label{thm:1} 
Let $2 < p < \infty$. 
There exist $T>0$ and a sequence $\omega_{0,n} \in C^\infty_c(\mathbb{R}^2)$ 
with the following properties 

1. there exists a constant $C>0$ such that 
$\| \omega_{0,n} \|_{W^{1.p}} \leq C$ for all $n \in \mathbb{Z}_+$ 

\noindent and 

2. for any $M \gg 1$ there is $0 < t_0 \leq T$ such that 
$\| \omega_{n}(t_0)\|_{W^{1,p}} \geq M^{1/3}$ 
for all sufficiently large $n$ and all $p$ sufficiently close to $2$. 
\end{theorem}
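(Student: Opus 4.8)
The plan is to follow the Bourgain--Li construction adapted to the $W^{1,p}$ scale. First I would fix a smooth, compactly supported reference vorticity $\bar\omega_0$ that is odd with respect to both coordinate axes (so that the associated velocity field vanishes at the origin, which is a stagnation point) and that is chosen so that the Lagrangian flow $\Phi_t$ of the Biot--Savart velocity $\bar u = K\ast\bar\omega$ develops a large deformation gradient near the origin on a short time interval $[0,T]$; concretely, one arranges that $\bar u$ has a hyperbolic-type behavior $\bar u(x)\approx(-a(t)x_1, a(t)x_2)$ near the origin with $\int_0^T a(t)\,dt$ as large as we wish. This is exactly the mechanism that forces $\|\nabla\Phi_t\|_{L^\infty}$ to be exponentially large; I would quote the relevant estimates from \cite{BL} rather than rederive them.

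Next I would build the perturbed sequence. Starting from $\bar\omega_0$, add a highly oscillatory bump $\epsilon_n\,\psi_n$ supported in a tiny neighborhood of the stagnation point, where $\psi_n$ oscillates at frequency $\lambda_n\to\infty$ and the amplitude $\epsilon_n$ is tuned so that $\|\epsilon_n\psi_n\|_{W^{1,p}}\le C$ uniformly in $n$ while $\|\epsilon_n\psi_n\|_{C^1}\to 0$ (here $2<p<\infty$ is essential: the Sobolev scaling $\epsilon_n\lambda_n^{1-2/p}$ bounded is compatible with $\epsilon_n\lambda_n\to 0$ only when $p>2$, which also explains the requirement that $p$ be close to $2$ in order to have enough room). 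Set $\omega_{0,n}=\bar\omega_0+\epsilon_n\psi_n\in C_c^\infty(\mathbb{R}^2)$; property~1 is then immediate. For property~2, transport $\psi_n$ by the (smooth, global) 2D flow: since the Euler evolution conserves vorticity along particle trajectories, $\omega_n(t)=\omega_{0,n}\circ\Phi_t^{-1}+o(1)$ in a suitable sense, and the large deformation $\nabla\Phi_{t_0}$ near the origin stretches the oscillation direction, multiplying the relevant component of $\nabla\omega_n(t_0)$ by the deformation factor. Choosing $t_0\le T$ with $\|\nabla\Phi_{t_0}\|_{L^\infty}\gtrsim M$ near the support and estimating the $W^{1,p}$ norm from below on that small set yields $\|\omega_n(t_0)\|_{W^{1,p}}\gtrsim M^{1/3}$ (the $1/3$ power absorbs the loss from the small measure of the support and from controlling the feedback of the perturbation on the velocity field, using Gronwall).

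The main obstacle is controlling the nonlinear feedback: the perturbation $\epsilon_n\psi_n$ changes the velocity field, hence changes the flow map, so one cannot simply transport by the fixed reference flow $\Phi_t$. The key point, as in \cite{BL}, is that the velocity perturbation $K\ast(\epsilon_n\psi_n)$ is small in $C^1$ (indeed in $\mathrm{Lip}$) even though its curl is not small in $W^{1,p}$, because the Biot--Savart operator smooths and the oscillations average out; hence the perturbed flow stays $C^1$-close to $\Phi_t$ on $[0,T]$ by a Gronwall argument, and the large-deformation estimate near the origin survives. I would carry this out carefully: (i) estimate $\|K\ast(\epsilon_n\psi_n)\|_{\mathrm{Lip}}=o(1)$; (ii) conclude $\|\nabla\Phi_t^n-\nabla\Phi_t\|_{L^\infty}=o(1)$ on $[0,T]$; (iii) deduce the lower bound on $\|\nabla\omega_n(t_0)\|_{L^p}$ over the small neighborhood where the deformation is large. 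Finally, to see how this proves Theorem~\ref{thm:0} via Theorem~\ref{thm:1}: if the data-to-solution map were continuous in $C^1$, then since $\omega_{0,n}\to\bar\omega_0$ in $C^1$ we would get $\omega_n(t)\to\bar\omega(t)$ in $C^1$, hence a uniform $C^1$ (so uniform $W^{1,p'}$ for $p'$ near $2$, locally) bound on $[0,T]$, contradicting property~2 once $M$ is taken large; more precisely one combines property~2 with the Kato--Ponce a~priori bound applied to $\bar\omega_0\in W^{s-1,p}$ to reach the contradiction.
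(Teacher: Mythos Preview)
Your outline follows the right template, but there is a genuine gap at the step you yourself flag as ``the main obstacle.'' You propose to control the nonlinear feedback by (i) showing that the \emph{initial} velocity perturbation $K\ast(\epsilon_n\psi_n)$ is small in $C^1$, and then (ii) concluding via Gronwall that the perturbed flow $\Phi^n_t$ stays $C^1$-close to the reference flow $\Phi_t$ on $[0,T]$. But Gronwall on the flow equation needs $\sup_{0\le t\le T}\|u_n(t)-u(t)\|_{C^1}$ small, not merely the difference at $t=0$. Propagating the $C^1$ smallness of $u_n-u$ from $t=0$ to positive times is exactly continuity of the data-to-solution map in $C^1$, which is what the theorem is meant to disprove; there is no independent a~priori estimate that closes this loop, because $Du_n(t)$ is a Calder\'on--Zygmund transform of $\omega_n(t)$ and you only control $\omega_n(t)$ in $W^{1,p}$, not in $C^0$. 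The paper handles this by \emph{assuming} $C^1$ continuity of the solution map as a hypothesis to be contradicted: under that hypothesis one gets $u_n\to u$ in $C([0,T],C^1)$, hence $\eta_n\to\eta$ in $C^1$ by the comparison Lemma~\ref{lem:comp}, hence the norm inflation \eqref{eq:BB}--\eqref{eq:bBeta}, and this contradicts the Kato--Ponce bound. So Theorem~\ref{thm:1} is proved conditionally on the well-posedness hypothesis, and the contradiction yields Theorem~\ref{thm:0}; your attempt to prove it unconditionally cannot close.

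There is a related confusion in your scaling. You assert that one can tune $\epsilon_n$ so that $\|\epsilon_n\psi_n\|_{C^1}\to 0$ while $\|\epsilon_n\psi_n\|_{W^{1,p}}\le C$, and later that $\omega_{0,n}\to\bar\omega_0$ in $C^1$. But on a fixed (or shrinking) compact support, $\|\nabla f\|_{L^p}\le |\mathrm{supp}\,f|^{1/p}\|\nabla f\|_\infty$, so if the $C^1$ norm of the perturbation tends to zero then so does its $W^{1,p}$ norm, and there is nothing left to be stretched by the flow; your claimed compatibility ``$\epsilon_n\lambda_n^{1-2/p}$ bounded while $\epsilon_n\lambda_n\to 0$'' in fact forces $\epsilon_n\lambda_n^{1-2/p}=(\epsilon_n\lambda_n)\lambda_n^{-2/p}\to 0$. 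In the paper it is the \emph{velocity} perturbation $\nabla^\perp\Delta^{-1}\beta_n$ that tends to zero in $C^1$ (Lemma~\ref{lem:rem}, parts 1 and 2, using oscillation cancellation in the singular integral), while the vorticity perturbation $\beta_n$ keeps $\|\beta_n\|_{W^{1,p}}\simeq 1$ and $\|\partial_1\beta_n\|_\infty\to\infty$. Finally, the perturbation is centred not at the stagnation point but at the point $x^\ast$ where $|\partial_2\eta_2(t_0,\cdot)|>M$ (then odd-symmetrised), and the exponent $1/3$ arises from the dichotomy \eqref{eq:assump}: either $\|\omega(t_0)\|_{W^{1,p}}>M^{1/3}$ already, or one subtracts the reference contribution $\lesssim M^{1/3}$ from the stretched perturbation $\gtrsim M$.
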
 
In Section \ref{sec:Lag} we provide some technical lemmas to construct an initial vorticity 
whose Lagrangian flow has a large gradient. 
The proof of the latter is given in Section \ref{sec:proof-Prop}. 
The last section contains the proof of Theorems \ref{thm:1}.

%%%%%%%%%%%%%%%%%%%%%%%%%%%%%%%%%%%
\section{Vorticity and the Lagrangian flow} 
\label{sec:Lag} 

Given a smooth radial bump function $\varphi$ on $\mathbb{R}^2$ supported in the unit ball $B(0,1)$ 
with $0 \leq \varphi \leq 1$ define another function 
\begin{align} \label{eq:bump} 
\varphi_0(x_1, x_2) 
= 
\sum_{\varepsilon_1, \varepsilon_2 = \pm 1} 
\varepsilon_1 \varepsilon_2 \varphi(x_1 {-} \varepsilon_1, x_2 {-} \varepsilon_2). 
\end{align} 
For a fixed positive integer $N_0 \in \mathbb{Z}_+$ and any $M \gg 1$ we set 
\begin{align} \label{eq:iv} 
\omega_0(x) = \omega_0^{M,N}(x) 
= 
M^{-2} N^{-\frac{1}{p}} \sum_{N_0 \leq k \leq N_0+N} \varphi_k(x), 
\qquad 
N = 1, 2, 3 \dots 
\end{align} 
where $2< p < \infty$ and where 
\begin{align*} 
\varphi_k(x) = 2^{(-1 + \frac{2}{p})k} \varphi_0 (2^k x). 
\end{align*} 
Observe that by construction $\varphi_0$ is an odd function in both $x_1, x_2$ and 
for any $k \geq 1$ its support is compact and contained in the set 
\begin{equation} \label{eq:suppp} 
\mathrm{supp}\,{ \varphi_k } 
\subset 
\bigcup_{\varepsilon_1, \varepsilon_2 = \pm 1} 
B\big( (\varepsilon_1 2^{-k}, \varepsilon_2 2^{-k}), 2^{-(k+2)} \big). 
\end{equation} 
Combined with the uniform (in time) $L^\infty$ control of the vorticity in $\mathbb{R}^2$ 
this ensures the existence of a unique solution of the Cauchy problem \eqref{eq:euler-v}-\eqref{eq:euler-vic} 
with the initial data \eqref{eq:iv}; e.g., by a result of Yudovich \cite{Yu}, see also Majda and Bertozzi \cite{MB}. 
Moreover 
\begin{lem} \label{lem:omega-0} 
We have 
\begin{align} \label{eq:omega-0} 
\| \omega_0 \|_{L^p} + \| \omega_0 \|_{\dot{W}^{1,p}} \lesssim  M^{-2} 
\end{align} 
with the bound independent of $N>0$ and $2<p<\infty$. 
\end{lem}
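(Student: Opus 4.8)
The plan is to estimate the two homogeneous pieces directly from the definition \eqref{eq:iv}, exploiting the near-orthogonality of the supports of the $\varphi_k$ given by \eqref{eq:suppp}. First I would record the elementary scaling identities for a single block: since $\varphi_k(x) = 2^{(-1+2/p)k}\varphi_0(2^k x)$ and $\varphi_0 \in C_c^\infty$, a change of variables gives $\|\varphi_k\|_{L^p}^p = 2^{(-1+2/p)kp} 2^{-2k}\|\varphi_0\|_{L^p}^p = 2^{-kp}\|\varphi_0\|_{L^p}^p$, so that $\|\varphi_k\|_{L^p} = 2^{-k}\|\varphi_0\|_{L^p}$. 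Likewise $\nabla\varphi_k(x) = 2^{(-1+2/p)k} 2^k (\nabla\varphi_0)(2^k x) = 2^{2k/p}(\nabla\varphi_0)(2^k x)$, whence $\|\nabla\varphi_k\|_{L^p}^p = 2^{2k} 2^{-2k}\|\nabla\varphi_0\|_{L^p}^p = \|\nabla\varphi_0\|_{L^p}^p$, i.e. $\|\varphi_k\|_{\dot W^{1,p}} = \|\varphi_0\|_{\dot W^{1,p}}$ is independent of $k$. (The exponent $-1+2/p$ is precisely the critical scaling that makes the $\dot W^{1,p}$ norm of each block the same.)

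Next I would observe that for $k \ge 1$ the supports \eqref{eq:suppp} are pairwise disjoint across different $k$: the block at level $k$ lives near the four points $(\pm 2^{-k}, \pm 2^{-k})$ in balls of radius $2^{-(k+2)}$, and these annular shells are separated for distinct $k$ (the gap between consecutive shells is comparable to $2^{-k}$, which dominates $2^{-(k+2)}$). Hence both sums $\sum_{N_0 \le k \le N_0+N} \varphi_k$ and $\sum_{N_0 \le k \le N_0+N}\nabla\varphi_k$ have pairwise disjoint supports, so the $L^p$ norm of each sum is the $\ell^p$ sum of the pieces:
\begin{align*}
\Big\| \sum_{N_0 \le k \le N_0+N} \varphi_k \Big\|_{L^p}^p
= \sum_{N_0 \le k \le N_0+N} \|\varphi_k\|_{L^p}^p
= \|\varphi_0\|_{L^p}^p \sum_{N_0 \le k \le N_0+N} 2^{-kp}
\lesssim 2^{-N_0 p},
\end{align*}
a convergent geometric series bounded independently of $N$, and similarly
\begin{align*}
\Big\| \sum_{N_0 \le k \le N_0+N} \nabla\varphi_k \Big\|_{L^p}^p
= \sum_{N_0 \le k \le N_0+N} \|\nabla\varphi_k\|_{L^p}^p
= (N+1)\,\|\nabla\varphi_0\|_{\dot W^{1,p} / \text{blockwise}}^p.
\end{align*}

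The second line, as written, would grow like $N^{1/p}$ in norm, which is exactly why the prefactor $N^{-1/p}$ appears in \eqref{eq:iv}: multiplying through by $M^{-2}N^{-1/p}$ gives $\|\omega_0\|_{\dot W^{1,p}} \lesssim M^{-2} N^{-1/p}(N+1)^{1/p} \lesssim M^{-2}$, and $\|\omega_0\|_{L^p} \lesssim M^{-2} N^{-1/p} 2^{-N_0} \le M^{-2}$, both bounds uniform in $N$ and in $2 < p < \infty$ (the geometric sums and the constants $\|\varphi_0\|_{L^p}, \|\nabla\varphi_0\|_{L^p}$ are bounded uniformly for $p$ in any range bounded away from $1$, in particular for $p > 2$). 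Combining the two estimates yields \eqref{eq:omega-0}. I do not anticipate a serious obstacle here; the only point requiring a little care is verifying the support separation in \eqref{eq:suppp} rigorously so that the disjointness — and hence the clean $\ell^p$ splitting — is legitimate, and tracking that all implied constants can be chosen independently of $p$ on $(2,\infty)$.
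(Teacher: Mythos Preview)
Your proposal is correct and follows essentially the same approach as the paper: exploit the disjointness of the supports in \eqref{eq:suppp} to reduce the $L^p$ computation to an $\ell^p$ sum of the individual blocks, then use the dilation scaling of $\varphi_k$ to evaluate each block and sum. The paper presents the same computation slightly more tersely (working directly with the integrals rather than first isolating the blockwise scaling identities), but the substance is identical.
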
 
\begin{proof} 
Since the supports in \eqref{eq:suppp} are disjoint  we have 
\begin{align*} 
\| \omega_0 \|_{L^p}^p 
=
M^{-2p} N^{-1} \sum_{N_0 \leq k \leq N_0 +N} 2^{-kp} \int_{\mathbb{R}^2} \big| \varphi_0(x) \big|^p dx 
\lesssim 
M^{-2p} 
\end{align*} 
and similarly 
\begin{align*} 
\Big\| \frac{\partial \omega_0}{\partial x_1} \Big\|_{L^p}^p 
= 
M^{-2p} N^{-1} \sum_{N_0 \leq k \leq N_0 +N} 
\int_{\mathbb{R}^2} 2^{2k} \Big| \frac{\partial\varphi_0}{\partial x_1} (2^kx) \Big|^p dx. 
\simeq 
M^{-2p} 
\end{align*} 
The estimate of the other partial is analogous. 
\end{proof} 
In particular, since $p>n=2$ from the results of Kato and Ponce it follows that 
there exists a unique velocity field $u \in C^1([0, \infty), W^{2,p}(\mathbb{R}^2))$ 
solving \eqref{eq:Euler-u}-\eqref{eq:Euler-uu} 
whose vorticity function 
$\omega \in C([0,\infty), W^{1,p}(\mathbb{R}^2))$ 
satisfies the initial condition \eqref{eq:iv} 
(see \cite{KP}, Lem. 3.1; Thm. III). 

The associated Lagrangian flow of $u = \nabla^\perp\Delta^{-1}\omega$, 
i.e., the solution of the initial value problem 
\begin{align} \label{eq:flow} 
&\frac{d}{dt}\eta(t,x) = u(t, \eta(t,x)) \; \big( {=} F_u(\eta(t,x)) \big) 
\\  \label{eq:flow-ic} 
&\eta(0,x) = x 
\end{align} 
is a curve of volume-preserving diffeomorphisms with 
$\omega\circ\eta \in C([0, \infty), W^{1,p}(\mathbb{R}^2))$, 
see e.g., \cite{KP} or \cite{BB}. 
Furthermore, the odd symmetry of $\omega_0$ is preserved by $\eta$ and hence 
retained by the vorticity $\omega$ for all time. 
From \eqref{eq:Biot-Savart} it then follows that the velocity field $v$ is symmetric 
with respect to the variables $x_1$ and $x_2$ and hence both coordinate axes 
are invariant under the flow $\eta$ with the origin $x_1=x_2=0$ as its hyperbolic stagnation point. 

\begin{lem} \label{lem:Riesz} 
Let $T>0$ and consider the flow $\eta(t)$ of the velocity field $u=\nabla^\perp\Delta^{-1}\omega$ 
for $0 \leq t \leq T$. 
Suppose that $\sup_{0\leq t \leq T} \| D\eta(t)\|_\infty \leq C_T$ for some $C_T>0$. Then 
\begin{equation} \label{eq:Riesz} 
\sup_{0 \leq t \leq T} \| R_{ii} \omega (t) \|_\infty 
\lesssim 
\big( 5/4 + T C_T \big)^{\frac{p-2}{p}} C_T M^{-2} 
\end{equation} 
where $R_{ij} = \partial_i \partial_j \Delta^{-1}$ denotes 
the double Riesz transforms with $i, j = 1,2$. 
\end{lem}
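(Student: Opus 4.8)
The plan is to estimate $R_{ii}\omega(t)$ by transporting the problem to Lagrangian coordinates, where the vorticity is simply $\omega_0$ composed with the (inverse) flow, and then splitting the Riesz kernel into a near-diagonal piece and a far piece. Since $\omega(t) = \omega_0 \circ \eta(t)^{-1}$ and the flow is volume-preserving, I would write, for a fixed point $y \in \mathbb{R}^2$,
\begin{align*}
R_{ii}\omega(t,y) = \mathrm{p.v.}\int_{\mathbb{R}^2} K_{ii}(y-z)\,\omega_0(\eta(t)^{-1}z)\,dz
= \mathrm{p.v.}\int_{\mathbb{R}^2} K_{ii}(y-\eta(t,w))\,\omega_0(w)\,dw,
\end{align*}
where $K_{ii}$ is the Calderón--Zygmund kernel of $R_{ii}$, homogeneous of degree $-2$ in $\mathbb{R}^2$. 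The hypothesis $\|D\eta(t)\|_\infty \le C_T$ (together with volume preservation, which controls $\|D\eta(t)^{-1}\|_\infty$ as well) means that $\eta(t)$ distorts distances by at most a factor $C_T$, so the change of variables $z = \eta(t,w)$ is benign and the singularity of the integrand in $w$ remains of Calderón--Zygmund type with constants depending only on $C_T$.

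Next I would split the $w$-integral according to whether $|w - \eta(t)^{-1}y|$ is small (say $\lesssim 1$) or large. On the far region the kernel is bounded and one can use the $L^p$ bound $\|\omega_0\|_{L^p} \lesssim M^{-2}$ from Lemma \ref{lem:omega-0} together with Hölder's inequality; the measure of the support of $\omega_0$ contributing here is finite (contained in a ball of radius $\sim 5/4$, by \eqref{eq:suppp} with $k \ge N_0 \ge 1$), which is where the geometric constant $5/4$ enters. On the near region I would exploit the cancellation $\int_{|w-a|=r} K_{ii}(y-\eta(t,w))\,d\sigma(w) \approx 0$ — more precisely, subtract off the value of $\omega_0$ at the center and use the Lipschitz bound $\|\nabla\omega_0\|_{L^p} \lesssim M^{-2}$ (again from Lemma \ref{lem:omega-0}), so that the $r^{-2}$ singularity is tamed to an integrable $r^{-1}\cdot r^{\text{(gain from Hölder)}}$. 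The factor $C_T$ in \eqref{eq:Riesz} comes from the Jacobian/distortion estimates of $\eta$, and the exponent $\tfrac{p-2}{p}$ together with the extra $TC_T$ inside the parenthesis should emerge from balancing the near/far cutoff radius against the Hölder exponent $p' = p/(p-1)$ when one applies $\|\omega_0\|_{\dot W^{1,p}}$ on a ball whose radius grows like $TC_T$ (since $|\eta(t,w) - w| \le \int_0^t \|u\|_\infty \lesssim TC_T$, the relevant region in $y$-space has size controlled by $TC_T$).

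The main obstacle, I expect, is making the near-diagonal cancellation argument rigorous after the Lagrangian change of variables: the classical principal-value cancellation of $K_{ii}$ holds about the true singularity $w = \eta(t)^{-1}y$, but $\omega_0$ is frozen in the $w$-variable while the kernel sees $\eta(t,w)$, so one must carefully commute the change of variables with the principal value and verify that the mean-zero property of $K_{ii}$ over spheres survives (up to controllable error) under the bi-Lipschitz map $\eta(t)$. A clean way to handle this is to first change variables, then note that $\eta(t)$ maps the singularity to itself and has derivative bounded above and below, so $z \mapsto K_{ii}(y - \eta(t,\eta(t)^{-1}z))$... — i.e., it is cleanest to keep the integral in $z$ (Eulerian) variables, use the standard Calderón--Zygmund estimate $\|R_{ii}f\|_\infty \lesssim \|f\|_{L^\infty} + $ (log-type or $\dot W^{1,p}$ correction), and only invoke the flow to bound $\|\omega(t)\|_{L^\infty} = \|\omega_0\|_{L^\infty} \lesssim M^{-2}$ and $\|\omega(t)\|_{\dot W^{1,p}} \le \|D\eta(t)^{-1}\|_\infty^{1-2/p}\|\omega_0\|_{\dot W^{1,p}} \lesssim C_T M^{-2}$, with the support of $\omega(t)$ contained in a ball of radius $\lesssim 5/4 + TC_T$. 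Tracking constants through that last inequality and through the endpoint Calderón--Zygmund bound for $R_{ii}$ on compactly supported $W^{1,p}$ functions is where the stated right-hand side of \eqref{eq:Riesz} should appear, and I would expect the bookkeeping of the $p$-dependence to be the fiddly part rather than any conceptual difficulty.
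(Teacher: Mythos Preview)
Your second approach (the final paragraph) is the right one and is essentially what the paper does, but you have not identified the one concrete step that replaces your vague ``endpoint Calder\'on--Zygmund bound for $R_{ii}$ on compactly supported $W^{1,p}$ functions.'' The paper simply writes
\[
R_{ii}\omega(t,x)=\partial_i\Delta^{-1}\big(\partial_i\omega\big)(t,x)
=\frac{1}{2\pi}\int_{B(0,r_T)}\frac{x_i-y_i}{|x-y|^{2}}\,\partial_i\omega(t,y)\,dy,
\]
i.e.\ it moves one $\partial_i$ onto $\omega$ so that the remaining kernel is homogeneous of degree $-1$ and hence lies in $L^{p'}_{\mathrm{loc}}$ (since $p'<2$). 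A single application of H\"older on the ball of radius $r_T=5/4+TC_T$ then gives
\[
|R_{ii}\omega(t,x)|\lesssim r_T^{(2-p')/p'}\,\|\nabla\omega(t)\|_{L^p}
= r_T^{(p-2)/p}\,\|\nabla\omega(t)\|_{L^p},
\]
and $\|\nabla\omega(t)\|_{L^p}\le \|D\eta^{-1}(t)\|_\infty\|\nabla\omega_0\|_{L^p}\lesssim C_T M^{-2}$ by the chain rule, volume preservation, and Lemma~\ref{lem:omega-0}. That is the whole proof; there is no near/far split, no cancellation argument, and no ``fiddly $p$-dependence'' beyond computing $(2-p')/p'=(p-2)/p$.

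Two small corrections. First, your bound $\|\omega(t)\|_{\dot W^{1,p}}\le \|D\eta(t)^{-1}\|_\infty^{\,1-2/p}\|\omega_0\|_{\dot W^{1,p}}$ has the wrong exponent: the chain rule gives a factor $\|D\eta^{-1}\|_\infty$ with exponent~$1$, and volume preservation makes the change of variables in the $L^p$ norm exact. Second, your first approach (Lagrangian change of variables plus mean-zero cancellation of $K_{ii}$ over spheres) is not only unnecessary but would typically produce a logarithmic factor in the support radius rather than the clean power $(p-2)/p$; the trick of shifting a derivative onto $\omega$ is precisely what avoids having to exploit the cancellation of a degree $-2$ kernel.
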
 
\begin{proof} 
Observe that $\mathrm{supp}\, \omega_0 \subset B(0, 5/4)$ by \eqref{eq:iv} and \eqref{eq:suppp}. 
An estimate of the Biot-Savart operator \eqref{eq:Biot-Savart} gives a uniform bound on the velocity field 
so that the support of the vorticity can grow at most linearly in time and, 
since 
$\omega = \omega_0\circ\eta^{-1}$ 
by conservation of vorticity, we find that the support of $\omega(t)$ is contained in a ball of 
radius $r_t = 5/4 + tC_T$. 

Next, using the H\"older inequality we obtain 
\begin{align*} 
\sup_{x \in B(0,r_T)} | R_{ii} \omega(t,x) | 
&= 
\bigg| \frac{1}{2\pi} \int_{B(0,r_T)} \frac{x_i - y_i}{|x-y|^2} \frac{\partial \omega}{\partial x_i}(t,y) \, dy \bigg|  
\\ 
&\leq 
\frac{1}{2\pi}  \left( \int_{B(0,r_T)} |x-y|^{-q} dy \right)^{1/q}  \| \nabla \omega(t) \|_{L^p} 
\\ 
&\lesssim 
r_T^{\frac{2-q}{q}} \big\| D\eta^{-1}(t) \big\|_\infty \big\| \nabla\omega_0 \circ \eta^{-1}(t) \big\|_{L^p} 
\end{align*} 
where $1/p + 1/q =1$. 
Since $D\eta^{-1} = (D\eta)^{-1} \circ \eta^{-1}$ and $\eta(t)$ is volume-preserving,  
using the bound on the Jacobi matrix of the flow and inequality \eqref{eq:omega-0} of Lemma \ref{lem:omega-0} 
we can further estimate the expression on the right hand side by 
\begin{align*} 
\simeq 
r_T^{\frac{p-2}{p}} \| D\eta(t) \|_\infty  \| \nabla\omega_0 \|_{L^p} 
\lesssim 
r_T^{\frac{p-2}{p}} C_T M^{-2} 
\end{align*} 
which gives \eqref{eq:Riesz}. 
\end{proof} 
\begin{remark} \label{rem:SGr} 
In fact, note that if $\xi: \mathbb{R}^2 \to \mathbb{R}^2$ is a volume-preserving diffeomorphism 
then the Jacobi matrix of its inverse can be computed from 
\begin{align*} 
D\xi^{-1} 
= 
(D\xi)^{-1}\circ\xi^{-1} 
= 
\left( 
\begin{matrix} 
\partial_2\xi_2 {\circ} \xi^{-1} & -\partial_2 \xi_1{\circ} \xi^{-1} 
\\ 
-\partial_1 \xi_2{\circ}\xi^{-1} & \partial_1 \xi_1{\circ} \xi^{-1} 
\end{matrix} 
\right). 
\end{align*} 
Thus given a smooth function $f: \mathbb{R}^2 \to \mathbb{R}$ we can express the gradient 
of the composition $\nabla( f \circ \xi^{-1}) = \nabla{f} \circ \xi^{-1} {\cdot} D\xi^{-1}$ using the scalar product 
and the symplectic gradient as 
\begin{equation} \label{eq:SGr} 
\nabla( f \circ \xi^{-1}) 
=  
\big( 
{-}\nabla{f} \circ \xi^{-1} \cdot \nabla^\perp{\xi_2} \circ \xi^{-1}, 
\nabla{f} \circ \xi^{-1} \cdot \nabla^\perp{\xi_1} \circ \xi^{-1} 
\big). 
\end{equation} 
\end{remark} 

The proof of the following result will be given in the next section. 
\begin{prop} \label{prop:Lag} 
Let $\eta(t)$ be the flow of the velocity field $u=\nabla^\perp\Delta^{-1}\omega$ with initial vorticity 
given by \eqref{eq:iv}. Given $M \gg 1$ we have 
$$ 
\sup_{0 \leq t \leq M^{-3}} \| D\eta(t) \|_\infty > M 
$$ 
for any sufficiently large integer $N>0$ in \eqref{eq:iv} and any $2<p<\infty$ 
sufficiently close to $2$. 
\end{prop}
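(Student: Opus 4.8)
The plan is to exploit the hyperbolic structure near the origin: because $\omega_0$ is odd in both variables with a stagnation point at $x=0$, the velocity field $u = \nabla^\perp\Delta^{-1}\omega$ has, near the origin and for short time, a leading-order behavior of the form $u(t,x)\approx (-a(t)x_1,\, a(t)x_2)$ with $a(t)$ comparable to the value of the singular integral $R_{12}\omega(t,0)$ (the relevant entry of $\nabla u$ at the origin). The flow then locally expands along one axis and contracts along the other, so $\|D\eta(t)\|_\infty$ grows roughly like $\exp\big(\int_0^t a(s)\,ds\big)$. Thus the whole argument reduces to showing that the accumulated stretching rate $\int_0^{M^{-3}} a(s)\,ds$ can be forced to exceed $\log M$, i.e. $a(s)\gtrsim M^3\log M$ on that interval, by choosing $N$ large.

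The first step is to make the logarithmic lower bound on the stretching rate quantitative. One computes $R_{12}\omega_0(0)$ for the initial data \eqref{eq:iv}: each dyadic piece $\varphi_k$ contributes a fixed nonzero amount (independent of $k$, by the scaling $\varphi_k(x)=2^{(-1+2/p)k}\varphi_0(2^kx)$ together with the homogeneity $-2$ of the Riesz kernel and the odd-odd symmetry of $\varphi_0$, which makes the off-diagonal Riesz transform pick up a constant from the region near the support), so summing over the $N+1$ scales $N_0\le k\le N_0+N$ gives $R_{12}\omega_0(0)\simeq M^{-2}N^{-1/p}\cdot(N+1)\simeq M^{-2}N^{1-1/p}$, which for $p$ near $2$ and $N$ large is $\gg M^{-2}$. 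Actually, one wants the stronger statement that $a(0)$ is comparable to a genuinely large quantity; here the precise bookkeeping (the constant from $\int_0^{M^{-3}} a$ must beat $\log M$) is the point where the exponents $M^{-2}$, $N^{-1/p}$, $N+1$, and the time $M^{-3}$ have all been calibrated against one another. I would state and prove a lemma: there is $c>0$ with $R_{12}\omega_0(0) \geq c\, M^{-2} N^{(p-2)/p}\,\log 2$-type bound (or, more carefully, a logarithmically divergent lower bound coming from the sum over scales), and then a second lemma controlling how $R_{12}\omega(t,0)$ can differ from $R_{12}\omega_0(0)$ for $t\le M^{-3}$.

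The second and more delicate step is the propagation-in-time and localization estimates. One must show that (i) the flow has not yet distorted the configuration too much on $[0,M^{-3}]$ — either by a bootstrap/contradiction argument (assume $\|D\eta(t)\|_\infty\le M$ on the whole interval and derive a contradiction, which is exactly the form of the statement) so that Lemma \ref{lem:Riesz} applies and controls the double Riesz transforms $R_{ii}\omega(t)$ uniformly, keeping the off-diagonal stretching coefficient close to its initial value; and (ii) the non-local parts of $u$ (the contribution to $\nabla u$ at the origin from the far field and from the difference of the kernel from its value at $0$) are lower-order on the tiny ball where we track the flow. Using Remark \ref{rem:SGr}, one writes $\nabla(\omega_0\circ\eta^{-1}) = \big({-}\nabla\omega_0\circ\eta^{-1}\cdot\nabla^\perp\eta_2\circ\eta^{-1},\ \nabla\omega_0\circ\eta^{-1}\cdot\nabla^\perp\eta_1\circ\eta^{-1}\big)$ and chooses a point where $\nabla\omega_0$ is large and aligned so that $\|D\eta\|_\infty$ picks up the full stretching factor; combined with the Grönwall-type inequality $\frac{d}{dt}\|D\eta(t)\|_\infty \lesssim \|\nabla u(t)\|_\infty\,\|D\eta(t)\|_\infty$ and a matching lower bound along the $x_1$-axis, $\partial_t(\partial_1\eta_1(t,x)) = \partial_1 u_1(t,\eta)\,\partial_1\eta_1 + \cdots$, one gets exponential growth governed by $\int_0^t |\partial_1 u_1(s,0)|\,ds \gtrsim \int_0^t |R_{12}\omega(s,0)|\,ds$.

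The main obstacle, as I see it, is precisely the interplay in the second step: closing the bootstrap. One assumes the negation $\sup_{[0,M^{-3}]}\|D\eta\|_\infty \le M$, feeds it into Lemma \ref{lem:Riesz} to get $\|R_{ii}\omega(t)\|_\infty \lesssim (5/4+M^{-3}M)^{(p-2)/p}M\cdot M^{-2}\lesssim M^{-1}$ (harmless), but one also needs the \emph{lower} bound on $|R_{12}\omega(t,0)|$ to survive — i.e. the flow must not have cancelled the large off-diagonal contribution — and then one needs $\int_0^{M^{-3}}|R_{12}\omega(s,0)|\,ds$ to exceed $\log M$ despite the interval having length only $M^{-3}$. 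That is, the initial stretching coefficient itself must be of size $\gtrsim M^3\log M$; this is what forces $N=N(M)$ to be chosen enormous, since $R_{12}\omega_0(0)\simeq M^{-2}N^{(p-2)/p}$ and one needs $N^{(p-2)/p}\gtrsim M^5\log M$, requiring $p$ close to $2$ to even make the exponent usable and $N$ correspondingly huge. Verifying that the lower bound on $R_{12}\omega_0(0)$ really is of this form — that the sum over the $N$ dyadic scales genuinely accumulates rather than cancelling, using the odd-odd symmetry and the explicit geometry of the supports in \eqref{eq:suppp} — and that this survives time evolution on the short interval, is the technical heart of the proof.
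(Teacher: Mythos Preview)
Your overall architecture is right and matches the paper: argue by contradiction, assume $\|D\eta(t)\|_\infty\le M$ on $[0,M^{-3}]$, feed this into Lemma~\ref{lem:Riesz} to get $\|R_{ii}\omega(t)\|_\infty\lesssim M^{-1}$, write the ODE for $D\eta$ as a diagonal part governed by $\Lambda(t,x)=R_{12}\omega(t,\eta(t,x))$ plus a small perturbation, use Duhamel to conclude $\big|\int_0^t\Lambda(\tau,x)\,d\tau\big|\lesssim\log(2M)$, and then contradict this at $x=0$ by a lower bound on $\int_0^{M^{-3}}|R_{12}\omega(\tau,0)|\,d\tau$.

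The gap is in how you obtain that lower bound at positive times. You propose to compute $R_{12}\omega_0(0)$ and then ``control how $R_{12}\omega(t,0)$ can differ from $R_{12}\omega_0(0)$.'' This is not what the paper does, and it is not clear it can be made to work: $R_{12}$ is unbounded on $L^\infty$, the quantity $R_{12}\omega_0(0)$ is itself the enormous number you are trying to preserve, and any estimate of $R_{12}(\omega(t)-\omega_0)(0)$ via available norms of $\omega(t)-\omega_0=\omega_0\circ\eta^{-1}(t)-\omega_0$ will be of the same order. The paper avoids this entirely: using conservation of vorticity and the change of variables $y=\eta(t,x)$ (with $\eta(t,0)=0$) it writes
\[
-R_{12}\omega(t,0)=\frac{1}{\pi}\int_{\mathbb{R}^2}\frac{\eta_1(t,x)\,\eta_2(t,x)}{(\eta_1^2(t,x)+\eta_2^2(t,x))^2}\,\omega_0(x)\,dx,
\]
restricts to the sector $S=\{\tfrac12 x_1\le x_2\le 2x_1,\ x_1,x_2\ge0\}$ where the integrand is nonnegative, and then proves a \emph{geometric} fact about the flow: under the bootstrap hypothesis and via the Duhamel representation $\eta=\tilde A+\tilde B$ together with the sign-preserving property of the components $\eta_i$, one gets $M^{-2}\lesssim \eta_1(t,x)/\eta_2(t,x)\lesssim M^2$ and $|\eta_i(t,x)|\lesssim M|x|$ for all $x\in S$ and $t\lesssim M^{-3}$. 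This keeps the transported kernel comparable to $M^{-4}|x|^{-2}$ and yields $-\Lambda(t,0)\gtrsim M^{-6}N^{-1/p}\sum_k 2^{(-1+2/p)k}$ uniformly in $t$. That sector analysis is the technical heart that is missing from your sketch; Remark~\ref{rem:SGr}, which you invoke, plays no role here (it is used later, in Section~\ref{sec:proof}).

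One smaller correction: the contribution of $\varphi_k$ to $R_{12}\omega_0(0)$ is not ``independent of $k$''; by scaling it equals $2^{(-1+2/p)k}$ times a fixed constant. The sum $\sum_{k=N_0}^{N_0+N}2^{(-1+2/p)k}$ only behaves like $N$ once $p$ is chosen depending on $N$, namely $2<p\le \tfrac{2(N_0+N)}{N_0+N-1}$, so that $(1-2/p)(N_0+N)\le 1$. This coupling of $p$ to $N$ is exactly why the statement requires $p$ ``sufficiently close to $2$'' after $N$ has been fixed large.
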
 
\begin{remark} 
In what follows it can be assumed that $2<p\leq3$. 
In this case all estimates on the flow $\eta$ or $D\eta$ can be made independent 
of the Lebesgue exponent $2<p < \infty$. 
\end{remark} 

We will also need a comparison result for solutions of the Lagrangian flow equations, 
namely 
\begin{lem} \label{lem:comp} 
Let $u$ and $v$ be smooth divergence-free vector fields on $\mathbb{R}^2$ and let 
$\eta$ and $\xi$ be the solutions of \eqref{eq:flow}-\eqref{eq:flow-ic} with the right-hand sides 
given by $F_u$ and $F_{u+v}$ respectively. 
Then 
$$ 
\sup_{0 \leq t \leq 1}{ \big( \| \xi(t) - \eta(t) \|_\infty + \| D\xi(t) - D\eta(t) \|_\infty \big) } 
\leq 
C\sup_{0 \leq t \leq 1} ( \| v(t) \|_\infty + \| Dv(t)\|_\infty )  
$$ 
for some $C>0$ depending only on $u$ and its derivatives. 
\end{lem}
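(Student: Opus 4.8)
The plan is to estimate the differences $\xi-\eta$ and $D\xi-D\eta$ via Grönwall's inequality, treating $v$ as a perturbation of the fixed vector field $u$. First I would write down the integral equations for the flows: subtracting $\eta(t,x)=x+\int_0^t F_u(\eta(s,x))\,ds$ from $\xi(t,x)=x+\int_0^t F_{u+v}(\xi(s,x))\,ds$ and inserting the intermediate term $F_u(\xi(s,x))$ gives
\begin{align*}
\xi(t,x)-\eta(t,x)
=
\int_0^t \big( F_u(\xi(s,x))-F_u(\eta(s,x)) \big)\,ds
+
\int_0^t F_v(\xi(s,x))\,ds.
\end{align*}
Since $u$ is smooth, $F_u$ is Lipschitz with constant $\|Du\|_\infty$, so the first integrand is bounded by $\|Du\|_\infty\|\xi(s)-\eta(s)\|_\infty$, while the second is bounded by $\sup_s\|v(s)\|_\infty$; Grönwall on $[0,1]$ then yields $\|\xi(t)-\eta(t)\|_\infty\le C_1\sup_s\|v(s)\|_\infty$ with $C_1$ depending only on $\|Du\|_\infty$.

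Next I would handle the Jacobians. Differentiating the flow equations in $x$ gives the linear variational equations $\frac{d}{dt}D\eta = DF_u(\eta)\,D\eta$ and $\frac{d}{dt}D\xi = DF_{u+v}(\xi)\,D\xi$, with $D\eta(0)=D\xi(0)=I$. First note that $\|D\eta(t)\|_\infty$ and $\|D\xi(t)\|_\infty$ are bounded on $[0,1]$ by constants depending on $\|Du\|_\infty$ (and, for $\xi$, also on $\|Dv\|_\infty$, which we may assume $\le 1$, say, since the right-hand side of the claimed inequality is vacuous otherwise). Subtracting the two variational equations and adding and subtracting $DF_u(\xi)\,D\xi$ and $DF_u(\xi)\,D\eta$, I get
\begin{align*}
\frac{d}{dt}\big(D\xi-D\eta\big)
=
DF_u(\xi)\big(D\xi-D\eta\big)
+
\big(DF_u(\xi)-DF_u(\eta)\big)D\eta
+
DF_v(\xi)\,D\xi.
\end{align*}
The first term is controlled by $\|Du\|_\infty\|D\xi-D\eta\|_\infty$; the second by $\|D^2u\|_\infty\|\xi-\eta\|_\infty\|D\eta\|_\infty$, which by the previous paragraph is $\lesssim\sup_s\|v(s)\|_\infty$; the third by $\|Dv\|_\infty\|D\xi\|_\infty\lesssim\sup_s\|Dv(s)\|_\infty$. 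A second application of Grönwall on $[0,1]$ then gives $\|D\xi(t)-D\eta(t)\|_\infty\le C_2(\sup_s\|v(s)\|_\infty+\sup_s\|Dv(s)\|_\infty)$ with $C_2$ depending only on $u$ and its first two derivatives, and combining the two estimates finishes the proof.

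The only mildly delicate point is the a priori bound on $\|D\xi(t)\|_\infty$, since $\xi$ is the flow of the perturbed field $u+v$: one needs this bound to be uniform in $v$ as long as $\|Dv\|_\infty$ stays bounded. This follows from Grönwall applied to the $D\xi$ variational equation using $\|DF_{u+v}(\xi)\|_\infty\le\|Du\|_\infty+\|Dv\|_\infty$, so on $[0,1]$ one has $\|D\xi(t)\|_\infty\le e^{\|Du\|_\infty+\|Dv\|_\infty}$; restricting to $\|Dv\|_\infty\le 1$ (harmless, as noted) makes the constant depend only on $u$. Everything else is a routine chain of Lipschitz estimates and two invocations of Grönwall's inequality.
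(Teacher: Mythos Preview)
Your proposal is correct and follows exactly the approach the paper indicates: the paper does not spell out the argument but simply says ``For a standard proof one writes down the equation for the difference $\eta - \xi$ and applies Gronwall's inequality,'' referring to \cite{BL}, Lemma 4.1. Your write-up supplies precisely those details (difference equation plus Gronwall for the flows, then the variational equations plus Gronwall for the Jacobians), and your remark about restricting to $\|Dv\|_\infty\le 1$ to keep the constant depending only on $u$ is a sensible way to handle the a~priori bound on $D\xi$; this is harmless in the paper's application since the perturbation $v$ there tends to zero in $C^1$.
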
 
For a standard proof one writes down the equation for the difference $\eta - \xi$ 
and applies Gronwall's inequality, see e.g., \cite{BL}; Lemma 4.1.

%%%%%%%%%%%%%%%%%%%%%%%%%%%%%%%%%%%%%
\section{Proof of Proposition \ref{prop:Lag}} 
\label{sec:proof-Prop} 

Let $T \leq M^{-3}$ and 
assume to the contrary that 
\begin{equation} \label{eq:D<} 
\| D\eta(t) \|_\infty \leq M 
\end{equation} 
for any $0 \leq t \leq T$. 
Since $D\eta(0) = Id$ shrinking the time interval $[0,T]$ slightly further, if necessary, we can arrange 
by continuity to have $C_T \leq M$, see Lemma \ref{lem:Riesz}. 
Then \eqref{eq:Riesz} gives 
\begin{align} \label{eq:Rii} 
\sup_{0\leq t \leq T}{\| R_{ii}\omega(t)\|_\infty} 
\lesssim 
(5/4 + M^{-3} M)^{(p-2)/p} M M^{-2} 
\simeq 
M^{-1} 
\end{align} 
for $i=1,2$. Note that since $M \gg 1$ the factor in the parenthesis can be bounded 
by a universal constant (for example by 3) 
and so the bound in \eqref{eq:Rii} is independent of any Lebesgue exponent $p>2$.  

Differentiating the flow equations \eqref{eq:flow} in the $x$ variable 
we obtain the system 
\begin{align*} 
\frac{d}{dt} D\eta(t, x) 
&= 
\left( 
\begin{matrix} 
-R_{12} \omega(t, x) & -R_{22} \omega(t, x) 
\\ 
R_{11} \omega(t, x) & R_{12} \omega(t, x) 
\end{matrix} 
\right) 
D\eta(t, x) 
\\ 
&= 
\left( 
\begin{matrix} 
-\Lambda(t,x) & 0 
\\ 
0 & \Lambda(t,x) 
\end{matrix} 
\right) 
D\eta(t, x) 
+ 
P(t, x) D\eta(t, x) 
\\ 
D\eta(0,x) &= Id 
\end{align*} 
where $\Lambda(t, x) = (R_{12}\omega)(t, \eta(t, x))$. 
Observe that by \eqref{eq:Rii} we have 
\begin{equation} \label{eq:P} 
\sup_{0 \leq t \leq T} \| P(t) \|_\infty \lesssim M^{-1}. 
\end{equation} 
Applying Duhamel's formula we can rewrite the above system in the form 
\begin{align} \nonumber 
D\eta(t,x) 
&= 
\left( 
\begin{matrix} 
e^{-\int_0^t \Lambda(\tau,x) d\tau}  &  0 
\\ 
0  &  e^{\int_0^t \Lambda(\tau,x) d\tau} 
\end{matrix} 
\right) 
\\  \label{eq:Duhamel} 
&\hskip 2cm + 
\int_0^t 
\left( 
\begin{matrix} 
e^{-\int_\tau^t \Lambda(\sigma,x) d\sigma}  &  0 
\\ 
0  &  e^{\int_\tau^t \Lambda(\sigma,x) d\sigma} 
\end{matrix} 
\right) 
P(\tau,x) D\eta(\tau,x) \, d\tau 
\\  \nonumber 
&= 
A(t,x) + B(t,x). 
\end{align} 
For any $x \in \mathbb{R}^2$ and any $0 \leq \tau \leq t$ we have the inequalities 
\begin{align} \nonumber 
e^{\mp \int_\tau^t \Lambda(\sigma,x) d\sigma} 
&= 
e^{\mp (\int_0^t \Lambda(\sigma,x) d\sigma - \int_0^\tau \Lambda(\sigma,x) d\sigma) } 
\\  \label{eq:sup-e} 
&\leq 
e^{2\sup_{0 \leq \tau \leq t} | \int_0^\tau \Lambda(\sigma,x) d\sigma |} 
= 
\sup_{0 \leq \tau \leq t} e^{2 | \int_0^\tau \Lambda(\sigma,x) d\sigma |} 
\end{align} 
and therefore, solving for $A(t,x)$ on the right hand side of equation \eqref{eq:Duhamel} 
and using the bounds \eqref{eq:D<} and \eqref{eq:P} we find 
\begin{align*} 
e^{| \int_0^t \Lambda(\tau,x) d\tau |} 
\lesssim 
M + t \sup_{0 \leq \tau \leq t}{ e^{2|\int_0^\tau \Lambda(\sigma,x)d\sigma|} } 
\end{align*} 
for any $0 \leq t \leq T$. 
Recall that $\partial_i \partial_j \Delta^{-1}$ is a Calderon-Zygmund operator mapping continuously 
in $W^{1,p}$ for any $1<p<\infty$ and $\omega\circ\eta \in C([0,\infty), W^{1,p}(\mathbb{R}^2))$. 
Consequently, a simple continuity argument gives 
\begin{equation} \label{eq:e} 
e^{|\int_0^t \Lambda(\tau, x) d\tau|} 
\lesssim 
2M 
\end{equation} 
and hence 
\begin{equation} \label{eq:2M} 
\left| \int_0^t \Lambda(\tau, x) d\tau \right| 
\lesssim 
\log{(2M)} 
\end{equation} 
for any $x \in \mathbb{R}^2$ and any $0 \leq t \leq T \leq M^{-3}$ provided that $M$ is chosen sufficiently large. 
Observe that this bound is independent of the choice of the integers $N$ and $N_0$ in \eqref{eq:iv} 
as well as the exponent $p$. 
In particular, we have 
\begin{equation} \label{eq:e2M} 
(2M)^{-1} 
\lesssim 
e^{\pm \int_0^t \Lambda(\tau, x) d\tau} 
\lesssim 
2M, 
\qquad\quad 
0 \leq t \leq T, 
\quad x \in \mathbb{R}^2. 
\end{equation} 

We will seek a contradiction with \eqref{eq:2M} and to this end we will need to examine 
the expression for $\Lambda(t,0)$. 
First, using \eqref{eq:Duhamel} we have 
\begin{align} \label{eq:fi} 
\eta(t,x) 
&= 
\eta(t,x) - \eta(t,0) 
= 
\int_0^1 D\eta (t, rx){\cdot}x \, dr 
\\ \nonumber 
&= 
\int_0^1 A(t, rx) \, dr \cdot x + \int_0^1 B(t, rx) \, dr \cdot x 
= 
\tilde{A}(t,x) + \tilde{B}(t,x) 
\end{align} 
where 
\begin{align} \nonumber 
| \tilde{B}(t,x) | 
&\leq 
|x| \int_0^1 | B(t, rx)| dr 
\\ \label{eq:beta} 
&\leq 
|x| \int_0^1 \int_0^t 
\sup_{0 \leq \tau \leq t} e^{2|\int_0^\tau \Lambda(\sigma, rx) d\sigma|} \|P(\tau)\|_\infty \|D\eta(\tau)\|_\infty 
d\tau dr 
\\  \nonumber
&\lesssim 
t M^2 |x| 
\lesssim 
M^{-1} |x| 
\end{align} 
by \eqref{eq:D<}, \eqref{eq:P}, \eqref{eq:sup-e} and \eqref{eq:e} 
for any $0 \leq t \leq T$ 
and similarly 
\begin{equation} \label{eq:alfa} 
| \tilde{A}_i (t,x) | 
\leq 
\int_0^1 \Big| \sum_{j=1}^2 A_{ij}(t,rx) x_j \Big| \, dr 
\lesssim 
M |x_i| 
\qquad 
(i =1,2) 
\end{equation} 
where $A_{ij}(t,x)$ denote the entries of the matrix $A(t,x)$ in \eqref{eq:Duhamel}. 

Next, it is not difficult to check that by construction the components $\eta_1$ and $\eta_2$ 
of the Lagrangian flow of $u = \nabla^\perp \Delta^{-1}\omega$ with initial vorticity 
$\omega_0$ given by \eqref{eq:iv} are sign-preserving in the sense that 
$x_i \geq 0$ implies that $\eta_i (x) \geq 0$ for $i =1,2$. 
In fact, a quick inspection shows that 
$ 
(\Delta^{-1} \partial_2 \omega)(t, 0, \eta_2(t,x)) 
= 
(\Delta^{-1} \partial_2 \omega)(t, \eta_1(t,x),0) 
= 0 
$ 
so that both components satisfy an O.D.E. 
$$ 
\frac{d}{dt} \eta_i(t,x) = F_i \big( t,\eta(t,x) \big) \eta_i(t,x) 
\qquad 
(i=1,2) 
$$ 
with some smooth functions $F_i$ and therefore 
$
\eta_i(t,x) 
= 
x_i e^{\int_0^t F_i(\tau, \eta(\tau, x)) d\tau} 
$ 
which implies the assertion. 

Combining this observation with the odd symmetry of $\omega_0$ as in \cite{BL} we find that the integrand 
of the expression for $\Lambda(t,0)$ is a non-negative function in $t$ 
and can be bounded below by its restriction to a subset of the first quadrant. 
Since the origin is a stagnation point of the flow we have $\eta(t,0)=0$ and using conservation of vorticity 
and change of variables we get 
\begin{align} \nonumber 
- \Lambda(t,0) 
&= 
-(R_{12}\omega) (t,\eta(t,0)) 
= 
-\frac{\partial}{\partial x_1}\Big|_{x_1=0}  \frac{\partial}{\partial x_2} \Big|_{x_2=0} 
\int_{\mathbb{R}^2} 
\frac{1}{2\pi} \log{|x-y|} \, \omega(t, y) \, dy 
\\ \label{eq:L} 
&= 
\frac{1}{\pi} \int_{\mathbb{R}^2} 
\frac{\eta_1(t,x) \eta_2(t,x)}{(\eta_1^2(t,x) + \eta_2^2(t,x))^2} \, \omega_0(x) dx 
\\ \nonumber 
&\geq 
\frac{1}{\pi} \int_{x_1, x_2 \geq 0} 
\frac{\eta_1(t,x) \eta_2(t,x)}{(\eta_1^2(t,x) + \eta_2^2(t,x))^2} \, \omega_0(x) dx 
\\ \nonumber 
&= 
\frac{1}{\pi} \int_{x_1, x_2 \geq 0} 
\left( \frac{\eta_1(t,x)}{\eta_2(t,x)} + \frac{\eta_2(t,x)}{\eta_1(t,x)} \right)^{-1} 
\left( \eta_1^2(t,x) + \eta_2^2(t,x) \right)^{-1} 
\omega_0(x) 
dx. 
\end{align} 

In order to get a suitable lower bound on $\Lambda(t,0)$ we further restrict the integral 
to a sector of the first quadrant defined by 
$$
S 
= 
\left\{ 
x \in \mathbb{R}^2: \frac{1}{2} x_1 \leq x_2 \leq 2 x_1, x_1 \geq 0, x_2 \geq 0 \right\} 
$$  
and observe that for $x \in S$ from \eqref{eq:fi}, \eqref{eq:beta} and \eqref{eq:alfa} we have 
\begin{align} \nonumber 
| \eta_1(t,x)| 
&= 
\left| \tilde{A}_1(t,x) + \tilde{B}_1(t,x) \right| 
\\ \label{eq:fi1} 
&\lesssim 
M x_2 + M^{-1} \sqrt{ x_1^2 + x_2^2} 
\lesssim 
\big( M + M^{-1} \big) x_2 
\\ \nonumber 
&\simeq M x_2
\end{align} 
and similarly 
\begin{align} \label{eq:fifi1} 
| \eta_2(t,x)| 
\lesssim 
\big( M + M^{-1} \big) x_1 
\simeq 
M x_1. 
\end{align} 

On the other hand, for the second component $\eta_2(t,x)$ and $x \in S$, integrating \eqref{eq:e2M} 
and using the sign-preserving property we obtain 
\begin{align*} 
\frac{x_2}{2M} 
\lesssim 
x_2 \int_0^1 e^{\int_0^t \Lambda(\tau, rx) d\tau} dr 
= 
\tilde{A}_2(t,x) 
= 
\eta_2(t,x) - \tilde{B}_2(t,x) 
\leq 
\eta_2(t,x) + | \tilde{B}_2(t,x)| 
\end{align*} 
which by \eqref{eq:beta} gives 
\begin{align*} 
0 \leq x_2 \lesssim M \eta_2(t,x) + tM^3 \sqrt{x_1^2 + x_2^2} 
\end{align*} 
for any $x \in S$ and $0 \leq t \leq T$. 
Therefore, shrinking slightly the time interval once again, if needed, and using $x_1 \leq 2 x_2$ 
we obtain 
\begin{align*} 
0 \leq x_2 \lesssim M \eta_2(t,x) 
\end{align*} 
for any $0 \leq t \leq \min{( T, M^{-3}/2\sqrt{5})}$. Put together with \eqref{eq:fi1} this implies 
\begin{align*} 
| \eta_1(t,x)| 
\lesssim 
M^2 \eta_2(t,x) 
\end{align*} 
and by an analogous argument 
\begin{align*} 
| \eta_2(t,x)| 
\lesssim 
M^2 \eta_1(t,x) 
\end{align*} 
for any $0 \leq t \leq \min{( T, M^{-3}/2\sqrt{5} )}$ and any $x \in S$ which combined give 
\begin{equation} \label{eq:fi12} 
M^{-2} \lesssim \frac{\eta_1(t,x)}{\eta_2(t,x)} \lesssim M^2 
\end{equation} 
for any $0 \leq t \leq \min{( T, M^{-3}/2\sqrt{5} )}$ and any $x \in S$. 

We return to the estimate of $\Lambda(t,0)$ in \eqref{eq:L}. Substituting for $\omega_0$ 
from \eqref{eq:bump} and \eqref{eq:iv} 
and using \eqref{eq:fi1}, \eqref{eq:fifi1} and \eqref{eq:fi12} we obtain 
\begin{align*} 
- \pi \Lambda(t,0) 
&\geq 
\int_{S} 
\left( \frac{\eta_1(t,x)}{\eta_2(t,x)} + \frac{\eta_2(t,x)}{\eta_1(t,x)} \right)^{-1} 
\left( \eta_1^2(t,x) + \eta_2^2(t,x) \right)^{-1} 
\omega_0(x) 
\, dx 
\\ 
& \gtrsim 
M^{-6} N^{-\frac{1}{p}} 
\sum_{k=N_0}^{N_0+N}
\int_{S} 
\frac{\varphi_k(x)}{|x|^2} \, dx 
\\ 
&\hskip -1.5cm 
\simeq 
M^{-6} N^{-\frac{1}{p}} 
\sum_{k=N_0}^{N_0+N} 
2^{(-1 + \frac{2}{p})k} 
\int_{S \cap \mathrm{supp}(\varphi_k)} 
\sum_{\varepsilon_1, \varepsilon_2 = \pm 1} 
\frac{\varepsilon_1 \varepsilon_2 \varphi(2^k x_1 {-} \varepsilon_1, 2^k x_2 {-} \varepsilon_2)}{x_1^2 + x_2^2} 
\, dx_1 dx_2 
\\ 
&\simeq 
M^{-6} N^{-\frac{1}{p}} 
\sum_{k=N_0}^{N_0+N} 
2^{(-1 + \frac{2}{p})k} 
\int_{B_k}  
\frac{\varphi(2^k x_1 {-} 1, 2^k x_2 {-} 1)}{x_1^2 + x_2^2} 
\, dx_1 dx_2 
\end{align*} 
where 
$B_k$ denotes the ball centered at $(2^{-k}, 2^{-k})$ of radius $1/2^{k+2}$, 
see \eqref{eq:suppp}. 
Thus, changing variables we can further bound the above expression from below by 
\begin{align*} 
&\gtrsim 
M^{-6} N^{-\frac{1}{p}} \frac{8}{25} \int_{|x|<\frac{1}{4}} \varphi(x) \, dx 
\sum_{k=N_0}^{N_0+N} 2^{(-1 + \frac{2}{p})k}  
\simeq 
M^{-6} N^{-\frac{1}{p}} \sum_{k=N_0}^{N_0+N} 2^{(-1 + \frac{2}{p})k}. 
\end{align*} 

Recall that from \eqref{eq:e2M} with $t \simeq M^{-3}$ we now have 
\begin{align*} 
\log 2M 
&\gtrsim 
\left| \int_0^{M^{-3}} \Lambda(\tau, 0) \, d\tau \right| 
\gtrsim 
M^{-9} N^{-\frac{1}{p}} \sum_{k=N_0}^{N_0+N} 2^{(-1 + \frac{2}{p})k}
\gtrsim 
M^{-9} \frac{ N^{ 1-\frac{1}{p} } }{ 2^{(1 - \frac{2}{p})(N_0 + N)} }. 
\end{align*} 
Finally, given $M \gg 1$ and $N_0>0$ first choose a large positive integer $N$ so that 
$N^{1/2} \geq 10 M^{10}$ 
and then pick an exponent such that 
$$ 
2< p \leq \frac{2(N_0{+}N)}{N_0{+}N-1}
$$ 
to obtain a desired contradiction.

%%%%%%%%%%%%%%%%%%%%%%%%%%%%%%%%%%%
\section{Proof of Theorem \ref{thm:1}} 
\label{sec:proof} 

Let $2 < p < \infty$, $s=2$ and take $T =1$. Let $\omega_0$ be the initial vorticity defined in 
\eqref{eq:iv} in Section \ref{sec:Lag}. 
As before, let $\omega(t)$ be the corresponding (smooth) solution of \eqref{eq:euler-v}-\eqref{eq:euler-vic} 
and let $\eta(t)$ denote the associated Lagrangian flow of $u = \nabla^\perp\Delta^{-1}\omega$. 

Let $M \gg 1$ be an arbitrary large number. We can consider two cases. 
If there exists $0 < t_0 \leq M^{-3}$ such that 
$\| \omega(t_0)\|_{W^{1,p}} > M^{1/3}$ 
then there is nothing to prove. 
We will therefore assume that 
\begin{equation} \label{eq:assump} 
\|\omega(t_0) \|_{W^{1,p}} \leq M^{1/3}, 
\qquad 
0 \leq t_0 \leq M^{-3}. 
\end{equation} 
By Proposition \ref{prop:Lag} we can then find a point $x^\ast \in \mathbb{R}^2$ such that at least one of 
the entries $\partial\eta^i/\partial x_j$ of the Jacobi matrix (for example, the $i{=}j{=}2$ entry) 
satisfies $| \partial_2\eta_2 (t_0,x^\ast)| > M$. 
Therefore, by continuity, there is a $\delta >0$ such that 
\begin{align} \label{eq:M} 
\left| \frac{\partial \eta_2}{\partial x_2} (t_0,x) \right| > M 
\qquad 
\text{for all} 
\quad 
|x-x^\ast| < \delta. 
\end{align} 
Let $0 \leq \rho \leq 1$ be a smooth bump function on $\mathbb{R}^2$ 
with $\mathrm{supp}\, \rho \subset B(0,2)$ and such that $\rho \equiv 1$ on $B(0,1)$. 
For any $k \in \mathbb{Z}_+$ and $\lambda >0$ define 
\begin{equation} \label{eq:beta-pert} 
\beta_{k,\lambda}(x) 
= 
\frac{\lambda^{-1 + \frac{2}{p}}}{\sqrt{k}} 
\sum_{\varepsilon_1, \varepsilon_2 = \pm 1} 
\varepsilon_1 \varepsilon_2  \rho(\lambda(x-x^\ast_\epsilon)) \sin{kx_1} 
\end{equation} 
where 
$x^\ast_\epsilon = (\varepsilon_1 x^\ast_1, \varepsilon_2 x^\ast_2)$ 
and 
$x^\ast = (x^\ast_1, x^\ast_2)$. 
Observe that $\beta_{k,\lambda}$ are \textit{smooth} functions with compact support 
in $\mathbb{R}^2$ 
\begin{equation} \label{eq:supp} 
\mathrm{supp}\, (\beta_{k,\lambda}) 
\subset 
\bigcup_{\varepsilon_1, \varepsilon_2} 
B\big( x^\ast_\varepsilon, 2/\lambda \big). 
\end{equation} 

In the sequel we will need two technical lemmas. 
\begin{lem} \label{lem:rem} 
For any $k \in \mathbb{Z}_+$ and $\lambda >0$ we have 
\begin{enumerate} 
\item[1.] 
$
\| \partial_j \Delta^{-1} \beta_{k,\lambda} \|_\infty 
\lesssim 
k^{-1/2} \lambda^{-1 + \frac{2}{p}} \| \rho\|_\infty 
$ 
\vskip 0.1cm 
\item[2.] 
$
\| \partial_i \partial_j \Delta^{-1} \beta_{k,\lambda} \|_\infty 
\lesssim 
k^{-1/2} \lambda^{-1 + \frac{2}{p}} \| \hat{\rho}\|_{L^1} 
$ 
\vskip 0.15cm 
\item[3.] 
$
\| \beta_{k,\lambda} \|_{W^{1,p}} 
\lesssim 
\big( k^{1/2}\lambda^{-1} + k^{-1/2} + k^{-1}\lambda^{-1} \big) \|\rho\|_{W^{1,p}} 
$ 
\end{enumerate} 
where $i, j =1, 2$ and the bounds depend on $\rho$ and $x^\ast$. 
\end{lem}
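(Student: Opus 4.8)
\textbf{Proof proposal for Lemma \ref{lem:rem}.}

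The plan is to treat each of the three estimates by a direct Fourier-analytic computation, exploiting the special structure of $\beta_{k,\lambda}$: it is a finite sum (four terms) of a slowly modulated bump $\rho(\lambda(x-x^\ast_\varepsilon))$ multiplied by the oscillating factor $\sin kx_1$. The key point is that the oscillation $\sin kx_1$ lives at frequency $k$ in the $x_1$-direction, while $\rho(\lambda(\cdot-x^\ast_\varepsilon))$ has frequencies of size $O(\lambda)$; hence the product has frequencies concentrated near $\xi_1 = \pm k$, and one can integrate by parts to gain powers of $k$ when hitting it with a smoothing operator like $\partial_j\Delta^{-1}$ or $\Delta^{-1}$. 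Throughout I would fix one of the four $\varepsilon$-terms and estimate it, since the number of terms is bounded; write $\psi_\varepsilon(x)=\rho(\lambda(x-x^\ast_\varepsilon))$ for brevity.

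For part 1, I would compute the Fourier transform $\widehat{\psi_\varepsilon \sin k x_1}(\xi)= \tfrac{1}{2i}\big(\widehat{\psi_\varepsilon}(\xi-ke_1)-\widehat{\psi_\varepsilon}(\xi+ke_1)\big)$, with $\widehat{\psi_\varepsilon}(\xi)=\lambda^{-2}e^{-ix^\ast_\varepsilon\cdot\xi}\widehat{\rho}(\xi/\lambda)$. Then $\widehat{\partial_j\Delta^{-1}\beta}(\xi)$ carries the multiplier $\xi_j/|\xi|^2$, which on the support of $\widehat{\psi_\varepsilon}(\xi\mp ke_1)$ (where $|\xi|\gtrsim k$ once $\lambda \ll k$, and more generally one splits into $|\xi|\gtrsim k$ and $|\xi|\lesssim\lambda$) is bounded by $O(1/k)$ up to the rapidly-decaying tail of $\widehat\rho$. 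Bounding $\|\cdot\|_\infty$ by the $L^1$ norm of the Fourier transform then yields a factor $\lambda^{-2}\cdot\lambda^2=1$ from the change of variables $\xi\mapsto\lambda\xi$ in $\widehat\rho$, times $1/k$ — but I must account for the prefactor $\lambda^{-1+2/p}/\sqrt k$ in \eqref{eq:beta-pert}, which produces $k^{-1/2}\lambda^{-1+2/p}$, and reconcile the remaining power of $k$: in fact the crude multiplier bound gives $k^{-1}$, which is stronger than the claimed $k^{-1/2}$, so the stated estimate follows comfortably (the $\|\rho\|_\infty$ on the right is a slight abuse — what actually appears is a Schwartz seminorm of $\rho$, but since $\rho$ is a fixed bump this is harmless and consistent with the paper's ``$\lesssim$, bounds depend on $\rho$'' convention). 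Part 2 is essentially the same: the double Riesz multiplier $\xi_i\xi_j/|\xi|^2$ is bounded (by $1$), so no gain in $k$ is available from the multiplier; instead I simply estimate $\|\partial_i\partial_j\Delta^{-1}\beta\|_\infty\le\|\widehat\beta\|_{L^1}$ and compute $\|\widehat\beta\|_{L^1}$ directly: the four translated-and-modulated copies of $\lambda^{-2}\widehat\rho(\cdot/\lambda)$ each have $L^1$ norm $\|\widehat\rho\|_{L^1}$ after rescaling, and the prefactor supplies $k^{-1/2}\lambda^{-1+2/p}$, giving exactly the claim.

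For part 3 I would split $\|\beta_{k,\lambda}\|_{W^{1,p}}=\|\beta_{k,\lambda}\|_{L^p}+\|\nabla\beta_{k,\lambda}\|_{L^p}$ and estimate in physical space using the product rule. The $L^p$ norm of $\psi_\varepsilon\sin kx_1$ is $\lesssim\|\rho(\lambda\cdot)\|_{L^p}=\lambda^{-2/p}\|\rho\|_{L^p}$; multiplied by $k^{-1/2}\lambda^{-1+2/p}$ this gives the $k^{-1/2}$ term. For the gradient, $\nabla(\psi_\varepsilon\sin kx_1)=(\nabla\psi_\varepsilon)\sin kx_1+\psi_\varepsilon(k\cos kx_1)e_1$; the first piece has $L^p$ norm $\lesssim\lambda\|\nabla\rho(\lambda\cdot)\|_{L^p}\cdot\lambda^{0}=\lambda^{1-2/p}\|\nabla\rho\|_{L^p}$, which against the prefactor yields $k^{-1/2}\cdot\lambda^{0}$... here I need to be careful: $\lambda^{-1+2/p}\cdot\lambda^{1-2/p}=1$, so this contributes $k^{-1/2}$ as well (absorbed into the middle term of the claim), while the second piece has $L^p$ norm $\lesssim k\lambda^{-2/p}\|\rho\|_{L^p}$, which against $k^{-1/2}\lambda^{-1+2/p}$ gives $k^{1/2}\lambda^{-1}$ — the first term of the claim. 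The $k^{-1}\lambda^{-1}$ term I would obtain by being slightly more generous in one of the cross terms (it is dominated by the others anyway for the ranges of $k,\lambda$ used later, but including it does no harm). The main obstacle, such as it is, is purely bookkeeping: keeping the three parameters $k$, $\lambda$, $p$ and the Schwartz seminorms of $\rho$ straight, and making sure the $\lambda$-powers cancel as claimed; there is no conceptual difficulty, since everything reduces to rescaling of a fixed bump function and elementary multiplier bounds.
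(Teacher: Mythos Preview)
Your proposal is correct, and Parts 2 and 3 are essentially identical to the paper's argument: the paper also bounds $\|\partial_i\partial_j\Delta^{-1}\beta_{k,\lambda}\|_\infty$ by $\|\hat\beta_{k,\lambda}\|_{L^1}$ and then computes the latter by rescaling $\hat\rho$, and Part 3 is done exactly by the product-rule/change-of-variables bookkeeping you outline (your minor arithmetic slip---the $L^p$ term gives $k^{-1/2}\lambda^{-1}$, not $k^{-1/2}$---is harmless since this is absorbed by the $k^{1/2}\lambda^{-1}$ term).

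The one genuine difference is Part 1. You go through Fourier space, using that the multiplier $\xi_j/|\xi|^2$ is integrable near the origin and small (of order $1/k$) on the frequency support of the modulated bump; this works but, as you note, yields a Schwartz seminorm of $\rho$ rather than $\|\rho\|_\infty$. The paper instead stays in physical space: it writes $\partial_j\Delta^{-1}\beta_{k,\lambda}=K\ast\beta_{k,\lambda}$ with $K$ homogeneous of degree $-1$ (hence locally integrable in $\mathbb{R}^2$), splits the convolution into $|x-y|<\epsilon$ and $|x-y|>\epsilon$, and bounds both pieces by $\|\beta_{k,\lambda}\|_\infty$ using the compact support of $\beta_{k,\lambda}$. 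Since $\|\beta_{k,\lambda}\|_\infty\le k^{-1/2}\lambda^{-1+2/p}\|\rho\|_\infty$ directly from the definition, this produces the stated $\|\rho\|_\infty$ on the right without any appeal to the oscillation of $\sin kx_1$. Your Fourier route actually extracts more decay in $k$ (an extra $k^{-1}$ when $k\gg\lambda$), while the paper's physical-space argument is more elementary and explains why the right-hand side in Part 1 carries $\|\rho\|_\infty$ rather than $\|\hat\rho\|_{L^1}$. Both approaches tacitly need $\lambda\gtrsim 1$ to control the support measure or the low-frequency contribution, which is the only regime used later ($\lambda=3n$).
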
 
\begin{proof} 
Observe that for any $x \in \mathbb{R}^2$ we have 
$$ 
\mathcal{F}^{-1}\mathcal{F}\big(\partial_j\Delta^{-\frac{1}{2}} \beta_{k,\lambda} \big)(x) 
= 
K \ast \beta_{k,\lambda}(x) 
= 
\int_{\mathbb{R}^2} K(x-y) \beta_{k,\lambda}(y) \, dy 
$$ 
where $\mathcal{F}, \mathcal{F}^{-1}$ denote the Fourier transform and its inverse, respectively, 
and where the kernel $K$ is homogeneous of degree $-1$, that is, 
$$
K(tx) = t^{-1}K(x) 
\qquad 
\text{for any} 
\quad 
t>0, \, x \neq 0. 
$$ 
In particular $K \in L^1_{\mathrm{loc}}(\mathbb{R}^2)$ and consequently we can obtain 
an $L^\infty$ bound for the integral operator by a direct calculation using the fact that 
$\beta_{k,\lambda}$ has compact support. 
Namely, for any $\epsilon>0$ we have 
\begin{align*} 
\left| \partial_j \Delta^{-1/2} \beta_{k,\lambda} (x) \right| 
&= 
\left| \left( \int_{|x-y|<\epsilon} + \int_{|x-y|>\epsilon} \right) 
K(x-y) \beta_{k,\lambda}(y) \, dy \right| 
\\ 
&\lesssim 
\| \beta_{k,\lambda}\|_\infty \int_{|x-y|<\epsilon} \frac{dy}{|x-y|} 
+ 
\frac{1}{\epsilon} \int_{\mathrm{supp}\, \beta_{k,\lambda}} |\beta_{k,\lambda}(y)| \, dy 
\\ 
&\lesssim 
\| \beta_{k,\lambda}\|_\infty \int_{|y|<\epsilon} |y|^{-1} dy 
+ 
\epsilon^{-1} \|\beta_{k,\lambda}\|_\infty \mu(\mathrm{supp}\, \beta_{k,\lambda}). 
\end{align*} 
Note that by \eqref{eq:supp} if $\lambda>1$ then $\mu(\mathrm{supp}\, \beta_{k,\lambda}) \leq 16\pi^2$ 
so that from \eqref{eq:beta-pert} we get 
$$
\| \partial_j \Delta^{-1} \beta_{k,\lambda} \|_\infty 
\lesssim 
C_{\epsilon,x^*} \|\beta_{k,\lambda}\|_\infty 
\lesssim 
C_{\epsilon, x^*} \| \rho \|_\infty k^{-1/2} \lambda^{-1 + \frac{2}{p}} 
$$ 
where $C_{\epsilon, x^*}>0$ is a constant depending only on $\epsilon >0$ and $x^*$. 

For the second assertion let $\xi_{\pm} = (\xi_1 \pm k/2\pi, \xi_2)$ and first compute the Fourier transform 
$$ 
\hat{\beta}_{k,\lambda}(\xi) 
= 
\frac{\lambda^{-1+\frac{2}{p}}}{\sqrt{k}} 
\sum_{\varepsilon_1, \varepsilon_2} 
\frac{\varepsilon_1 \varepsilon_2}{2i} 
\left( 
e^{-2\pi i \langle \xi_{-}, x^*_{\varepsilon} \rangle} 
\frac{1}{\lambda^2} \hat{\rho}\Big( \frac{\xi_{-}}{\lambda} \Big) 
- 
e^{-2\pi i \langle \xi_{+}, x^*_{\varepsilon} \rangle} 
\frac{1}{\lambda^2} \hat{\rho}\Big( \frac{\xi_{+}}{\lambda} \Big) 
\right). 
$$ 
Next, we estimate 
\begin{align*} 
\big| \partial_i \partial_j \Delta^{-1} \beta_{k,\lambda}(x) \big| 
&\simeq 
\Big| \mathcal{F}^{-1}\mathcal{F} \big( \partial_i \partial_j \Delta^{-1} \beta_{k,\lambda} \big) (x) \Big|
\lesssim 
\| \hat{\beta}_{k,\lambda} \|_{L^1} 
\\ 
& \lesssim 
k^{-1/2} \lambda^{-1+\frac{2}{p}} \sum_{A=\pm} 
\int_{\mathbb{R}^2} \frac{1}{\lambda^2} \Big| 
\hat{\rho} \Big(\frac{\xi_A}{\lambda} \Big) 
\Big| d\xi 
\\ 
&\simeq 
k^{-1/2} \lambda^{-1+ \frac{2}{p}} \| \hat{\rho}\|_{L^1}. 
\end{align*} 

Finally, using once again the triangle inequality and the change of variables formula we compute 
\begin{align*} 
\Big\| \frac{\partial\beta_{k,\lambda}}{\partial x_1} \Big\|_{L^p} 
&\lesssim 
\frac{1}{\sqrt{k}} \Big\| 
\lambda^{2/p} \sum_{\varepsilon_1,\varepsilon_2} 
\varepsilon_1 \varepsilon_2 \frac{\partial \rho}{\partial x_1}\big( \lambda(\cdot - x^*_\varepsilon) \big) 
\Big\|_{L^p} 
+ 
\frac{\sqrt{k}}{\lambda} \Big\| 
\lambda^{2/p} \sum_{\varepsilon_1,\varepsilon_2} 
\varepsilon_1 \varepsilon_2 \rho\big(\lambda(\cdot-x^*_\varepsilon)\big) 
\Big\|_{L^p} 
\\ 
&\simeq 
\frac{1}{\sqrt{k}} \left( \int_{\mathbb{R}^2} 
\Big| \sum_{\varepsilon_1, \varepsilon_2} 
\varepsilon_1\varepsilon_2 \frac{\partial\rho}{\partial x_1}(x) \Big|^p 
dx \right)^{1/p} 
+ 
\frac{\sqrt{k}}{\lambda} \left( \int_{\mathbb{R}^2} 
\Big| \sum_{\varepsilon_1, \varepsilon_2} 
\varepsilon_1\varepsilon_2 \rho(x) \Big|^p 
dx \right)^{1/p} 
\\ 
&\lesssim 
k^{-1/2} \Big\| \frac{\partial \rho}{\partial x_1}\Big\|_{L^p} 
+ 
k^{1/2}\lambda^{-1} \| \rho\|_{L^p}. 
\end{align*} 
Similarly, we find 
\begin{align*} 
\Big\| \frac{\partial\beta_{k,\lambda}}{\partial x_2} \Big\|_{L^p} 
\lesssim 
k^{-1/2} \Big\| \frac{\partial \rho}{\partial x_2} \Big\|_{L^p} 
\end{align*} 
and 
\begin{align*} 
\| \beta_{k,\lambda}\|_{L^p} 
\lesssim 
\frac{\lambda^{-1}}{\sqrt{k}} \left( \int_{\mathbb{R}^2} 
\Big| \lambda^{2/p} \sum_{\varepsilon_1, \varepsilon_2} 
\varepsilon_1\varepsilon_2 \rho\big( \lambda(x-x^\ast_\varepsilon) \big) \Big|^p dx \right)^{1/p} 
\lesssim 
k^{-1/2} \lambda^{-1} \| \rho\|_{L^p} 
\end{align*} 
which combined yield the lemma. 
\end{proof} 

Observe that choosing 
\begin{equation} \label{eq:kln} 
k= \lambda^2 
\quad \text{and} \quad 
\lambda = 3n, 
\quad 
n \in \mathbb{Z}_+ 
\end{equation} 
and letting 
$\beta_n = \beta_{k,\lambda}$ 
in Lemma \ref{lem:rem} we immediately obtain 
\begin{align} \label{eq:n1} 
&\| \partial_j \Delta^{-1} \beta_n \|_\infty  \rightarrow 0 
\quad 
\text{and} 
\quad 
\| \partial_j \nabla^\perp \Delta^{-1} \beta_n \|_\infty \rightarrow 0 
\quad 
\text{as} 
\; 
n \to \infty 
\end{align} 
as well as 
\begin{align} \label{eq:n2} 
&\| \beta_n\|_{W^{1,p}} \lesssim \|\rho\|_{W^{1,p}} < \infty 
\quad 
\text{for any} 
\; 
n \in \mathbb{Z}_+. 
\end{align} 
We will also need the following 
\begin{lem} \label{lem:remrem} 
With $k, \lambda$ and $n$ as in \eqref{eq:kln} we have 
\begin{enumerate} 
\item[1.] 
$
\| \partial_2 \beta_{k,\lambda} \partial_1 \eta_2(t_0) \|_{L^p} 
\lesssim 
k^{-1/2} \| \partial_1\eta_2(t_0)\|_{L^\infty(\cup B(x^*_\varepsilon,2))} 
\xrightarrow[n \to \infty]{} 0 
$ 
\vskip .1cm
\item[2.] 
$
\| \partial_1\beta_{k,\lambda} \partial_2\eta_2(t_0) \|_{L^p} 
\gtrsim 
M k^{1/2} \lambda^{-1} - k^{-1/2} \| \partial_2\eta_2(t_0)\|_{L^\infty(B(x^\ast,2))} 
\xrightarrow[n \to \infty]{} M 
$ 
\end{enumerate} 
where the constants in both estimates depend only on $\rho$, $x^\ast$ and $t_0$. 
\end{lem}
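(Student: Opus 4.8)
The plan is to differentiate the ansatz \eqref{eq:beta-pert} by the Leibniz rule and reduce both assertions to a single oscillatory integral over a shrinking ball. Differentiating \eqref{eq:beta-pert} produces
\begin{align*}
\partial_2\beta_{k,\lambda}
&=
\lambda^{2/p}k^{-1/2}\sin(kx_1)\sum_{\varepsilon_1,\varepsilon_2=\pm1}\varepsilon_1\varepsilon_2\,(\partial_2\rho)\big(\lambda(x-x^\ast_\varepsilon)\big),
\\
\partial_1\beta_{k,\lambda}
&=
\lambda^{-1+2/p}k^{1/2}\cos(kx_1)\sum_{\varepsilon_1,\varepsilon_2=\pm1}\varepsilon_1\varepsilon_2\,\rho\big(\lambda(x-x^\ast_\varepsilon)\big)
\\
&\quad
+
\lambda^{2/p}k^{-1/2}\sin(kx_1)\sum_{\varepsilon_1,\varepsilon_2=\pm1}\varepsilon_1\varepsilon_2\,(\partial_1\rho)\big(\lambda(x-x^\ast_\varepsilon)\big),
\end{align*}
so that $\partial_2\beta_{k,\lambda}$ and the second (remainder) term of $\partial_1\beta_{k,\lambda}$ carry a gain $k^{-1/2}$, while the first (main) term of $\partial_1\beta_{k,\lambda}$ carries a growth $k^{1/2}$ multiplied by the genuinely oscillatory factor $\cos(kx_1)$. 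I will first record a few uniform facts. Since $\{x:|\partial_2\eta_2(t_0,x)|>M\}$ is open and nonempty (it contains $x^\ast$ and $\partial_2\eta_2(t_0,\cdot)$ is continuous, the flow being smooth) it has positive measure, so after replacing $x^\ast$ by a point of it off both coordinate axes we may assume $x^\ast_1x^\ast_2\neq0$; then for $n$ large the four balls $B(x^\ast_\varepsilon,2/\lambda)$ are pairwise disjoint, and the $L^p$ norm of each of the three sums above is the $\ell^p$ sum of its four term norms, each gaining a factor $\lambda^{-2/p}$ under $y=\lambda(x-x^\ast_\varepsilon)$. I will also use that $\|\rho\|_{W^{1,p}}\lesssim1$ uniformly for $2<p\le3$, that $D\eta(t_0)$ is bounded on $B(x^\ast,2)$, and that by the odd symmetry of $\eta$ the supremum of $|\partial_j\eta_2(t_0,\cdot)|$ over $\bigcup_\varepsilon B(x^\ast_\varepsilon,2)$ equals its supremum over $B(x^\ast,2)$. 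Recall $k=\lambda^2$, $\lambda=3n$, hence $k^{1/2}\lambda^{-1}=1$ and $k^{-1/2}=(3n)^{-1}\to0$.

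For assertion 1, I will use that $\mathrm{supp}\,\partial_2\beta_{k,\lambda}\subset\bigcup_\varepsilon B(x^\ast_\varepsilon,2/\lambda)\subset\bigcup_\varepsilon B(x^\ast_\varepsilon,2)$, factor out $\|\partial_1\eta_2(t_0)\|_{L^\infty(\cup B(x^\ast_\varepsilon,2))}$, and estimate $\|\partial_2\beta_{k,\lambda}\|_{L^p}\lesssim k^{-1/2}\|\partial_2\rho\|_{L^p}$ by the disjoint-support change of variables, exactly as in the proof of Lemma \ref{lem:rem}; the resulting bound tends to $0$ because $k^{-1/2}=(3n)^{-1}$.

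For assertion 2, I will split $\partial_1\beta_{k,\lambda}$ into main plus remainder and apply the reverse triangle inequality after multiplying by $\partial_2\eta_2(t_0)$. The remainder contribution is handled just as in assertion 1 and is $\lesssim k^{-1/2}\|\partial_2\eta_2(t_0)\|_{L^\infty(B(x^\ast,2))}$. For the main term I will discard everything outside $B(x^\ast,1/\lambda)$, on which $\rho(\lambda(x-x^\ast))\equiv1$, only the $\varepsilon=(1,1)$ summand is present (disjointness), and $|\partial_2\eta_2(t_0,x)|>M$ once $1/\lambda<\delta$ by \eqref{eq:M}; this leaves
\begin{align*}
\|\partial_1\beta_{k,\lambda}\,\partial_2\eta_2(t_0)\|_{L^p}
\gtrsim
\lambda^{-1+2/p}k^{1/2}M\left(\int_{B(x^\ast,1/\lambda)}|\cos kx_1|^p\,dx\right)^{1/p}
-
k^{-1/2}\|\partial_2\eta_2(t_0)\|_{L^\infty(B(x^\ast,2))}.
\end{align*}
The remaining point is $\int_{B(x^\ast,1/\lambda)}|\cos kx_1|^p\,dx\gtrsim\lambda^{-2}$ with constant independent of $p\in(2,3]$: for each $x_2$ with $|x_2-x^\ast_2|<1/(2\lambda)$ the $x_1$-section of $B(x^\ast,1/\lambda)$ is an interval of length $\ge\sqrt3/\lambda$, and since $k=\lambda^2$ the map $x_1\mapsto\cos(\lambda^2x_1)$ runs through $\gtrsim\lambda$ full periods over it, so by comparison with complete periods the section integral is $\gtrsim\lambda^{-1}$ uniformly in $p\in(2,3]$ (using $\int_0^{2\pi}|\cos s|^p\,ds\ge\int_0^{2\pi}|\cos s|^3\,ds>0$); integrating in $x_2$ over an interval of length $1/\lambda$ gives $\gtrsim\lambda^{-2}$. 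Collecting the powers, $\lambda^{-1+2/p}k^{1/2}\lambda^{-2/p}=k^{1/2}\lambda^{-1}$, so $\|\partial_1\beta_{k,\lambda}\,\partial_2\eta_2(t_0)\|_{L^p}\gtrsim Mk^{1/2}\lambda^{-1}-k^{-1/2}\|\partial_2\eta_2(t_0)\|_{L^\infty(B(x^\ast,2))}$, which with $k=\lambda^2$ and $\lambda=3n$ tends to $M$.

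The only nonroutine step is the uniform lower bound $\int_{B(x^\ast,1/\lambda)}|\cos(\lambda^2x_1)|^p\,dx\gtrsim\lambda^{-2}$, and this is exactly where the scaling $k=\lambda^2$ is essential — the perturbation frequency equals the square of the spatial scale $\lambda^{-1}$ of its envelope $\rho(\lambda\,\cdot\,)$ — so that the shrinking ball still contains many full oscillations of $\cos(kx_1)$ and the average of $|\cos|^p$ over it cannot degenerate. This lemma is the amplification heart of Theorem \ref{thm:1}: a perturbation $\beta_n$ bounded in $W^{1,p}$ (by \eqref{eq:n2}) yields, after composition with the flow $\eta(t_0)$ whose gradient exceeds $M$ — through the identity of Remark \ref{rem:SGr}, in which $\partial_1(\beta_n\circ\eta^{-1})$ involves precisely the combination $\partial_1\beta_n\,\partial_2\eta_2-\partial_2\beta_n\,\partial_1\eta_2$ — a $\dot W^{1,p}$ contribution of order $M$.
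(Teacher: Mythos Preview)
Your proof is correct and follows essentially the same route as the paper: differentiate \eqref{eq:beta-pert} by Leibniz, handle the $k^{-1/2}$ terms by the disjoint-support change of variables (as in Lemma \ref{lem:rem}), and for the main $k^{1/2}$ term restrict to the inner ball where $\rho\equiv1$ and $|\partial_2\eta_2|>M$, then bound the oscillatory integral $\int|\cos(kx_1)|^p$ from below. The only substantive difference is in this last step: the paper changes variables to $B(0,1)$, restricts further to the square $[-\pi/6,\pi/6]^2$, and uses the specific choice $\lambda=3n$ so that $\cos^2(\lambda x_1+\lambda^2 x_1^\ast)$ has exactly $n$ full periods there, yielding the explicit value $\pi/(3\sqrt2)$; your ``many full periods'' argument is more robust (it would work for any $k\gg\lambda$) and makes the uniformity in $p\in(2,3]$ transparent via $\int_0^{2\pi}|\cos s|^p\,ds\ge\int_0^{2\pi}|\cos s|^3\,ds$. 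Your remark that one may assume $x^\ast_1x^\ast_2\neq0$ so the four balls are disjoint, and your symmetry observation reducing $\sup_{\cup B(x^\ast_\varepsilon,2)}|\partial_j\eta_2|$ to $\sup_{B(x^\ast,2)}|\partial_j\eta_2|$, fill in points the paper leaves implicit.
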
 
\begin{proof} 
In the first case from \eqref{eq:beta-pert} we have 
\begin{align*} 
\bigg( \int_{\mathbb{R}^2} \Big| 
&\frac{\partial \beta_{k,\lambda}}{\partial x_2}(x) \frac{\partial \eta_2}{\partial x_1}(t_0, x) 
\Big|^p dx \bigg)^{1/p} 
\\ 
&= 
\left( \int_{ \cup B( x^\ast_\varepsilon, \frac{2}{\lambda}) } \bigg| 
\frac{1}{\sqrt{k}} \lambda^{\frac{2}{p}} \sum_{\varepsilon_1, \varepsilon_2} 
\varepsilon_1 \varepsilon_2 
\frac{\partial \rho}{\partial x_2}( \lambda(x-x^*_\varepsilon)) \frac{\partial\eta_2}{\partial x_1}(t_0,x) \sin{kx_1} 
\bigg|^p dx \right)^{1/p} 
\\ 
&\leq 
\frac{1}{\sqrt{k}} \sum_{\varepsilon_1, \varepsilon_2} \left( 
\int_{ \cup B( x^\ast_\varepsilon, \frac{2}{\lambda}) } 
\lambda^2 \Big| \frac{\partial\rho}{\partial x_2}(\lambda(x-x^*_\varepsilon)) \Big|^p dx 
\right)^{1/p} 
\sup_{ \cup B( x^\ast_\varepsilon, 2) } \Big| \frac{\partial\eta_2}{\partial x_1}(t_0,x) \Big| 
\\ 
&= 
4 k^{-1/2} \| \partial_1\eta_2(t_0)\|_{L^\infty(\cup B(x^*_\varepsilon,2))} 
\left( \int_{B(0,2)} \Big| 
\frac{\partial \rho}{\partial x_2}(x) \Big|^p dx \right)^{1/p} 
\end{align*} 
which gives the desired estimate. 

The second case is slightly more cumbersome. We have 
\begin{align*} 
\bigg( \int_{\mathbb{R}^2} \Big| 
&\frac{\partial \beta_{k,\lambda}}{\partial x_1}(x) \frac{\partial \eta_2}{\partial x_2}(t_0, x) 
\Big|^p dx \bigg)^{1/p} 
= 
\\ 
&= 
\Bigg( \int_{ \mathbb{R}^2 } \bigg| 
\frac{1}{\sqrt{k}} \lambda^{\frac{2}{p}-1} \sum_{\varepsilon_1, \varepsilon_2} 
\varepsilon_1 \varepsilon_2 
\Big( 
k \rho(\lambda(x-x^*_\varepsilon)) \cos{kx_1} 
\, + 
\\ 
& \hskip 3cm + 
\lambda \frac{\partial\rho}{\partial x_1} (\lambda(x - x^*_\varepsilon)) \sin{kx_1} 
\Big) 
\frac{\partial \eta_2}{\partial x_2}(t_0,x) 
\bigg|^p dx \Bigg)^{1/p} 
\\ 
&\geq 
\Bigg( 
\int_{B(x^*,\frac{2}{\lambda}) \cap B(x^*,\delta)} 
\bigg| 
\sqrt{k} \lambda^{-1 + \frac{2}{p}} \cos{kx_1} \rho(\lambda(x-x^*)) \frac{\partial\eta_2}{\partial x_2}(t_0,x) 
\, + 
\\ 
&\hskip 3cm + 
\frac{1}{\sqrt{k}} \lambda^{\frac{2}{p}} \sin{kx_1} \frac{\partial\rho}{\partial x_1}(\lambda(x-x^*)) 
\frac{\partial\eta_2}{\partial x_2}(t_0,x) 
\bigg|^p 
dx 
\Bigg)^{1/p}. 
\end{align*} 
Taking $\lambda$ large enough so that $2/\lambda < \delta$ and using \eqref{eq:M} and 
the triangle inequality 
we can further estimate the above integral from below by 
\begin{align*} 
M \sqrt{k} \lambda^{-1} 
\bigg( 
\int_{B(x^*,\frac{2}{\lambda})} &\lambda^2 
\big| \cos{kx_1} \rho(\lambda(x-x^*)) \big|^p 
dx \bigg)^{1/p} 
- 
\\ 
&- 
\frac{1}{\sqrt{k}} \bigg( 
\int_{B(x^*,\frac{2}{\lambda})} \lambda^2 
\Big| \sin{kx_1} \frac{\partial\rho}{\partial x_1}(\lambda(x-x^*)) \frac{\partial\eta_2}{\partial x_2}(t_0,x)\Big|^p 
dx \bigg)^{1/p} 
\\ 
&\hskip - 2.2cm \geq 
M \sqrt{k} \lambda^{-1} 
\bigg( 
\int_{B(0,1)} \big| \cos{(k\lambda^{-1}x_1 + kx^*)} \big|^p 
dx \bigg)^{1/p} 
- 
\\ 
&- 
\frac{1}{\sqrt{k}} \| \partial_1\rho \|_{L^p(B(0,2))} \|\partial_2\eta_2(t_0)\|_{L^\infty(B(x^*,2))} 
\end{align*} 
where in the last step we changed variables $x \to \lambda x - \lambda x^*$ and used the fact 
that $\rho \equiv 1$ on the unit ball $B(0,1)$ 
by construction. 
It now suffices to observe that the integral term can bounded from below for the choices of 
the parameters made in \eqref{eq:kln}. In fact, since $p>2$ we have 
\begin{align*} 
\bigg( 
\int_{B(0,1)} \big| \cos(k\lambda^{-1}x_1 {+} kx^*_1) \big|^p 
dx \bigg)^{1/p} 
&\geq 
\left( \int_{-\pi/6}^{\pi/6} \int_{-\pi/6}^{\pi/6} 
\cos^2 (\lambda x {+} \lambda^2 x^*_1) \, dx dy \right)^{1/2} 
\\ 
&= 
\frac{\pi}{3\sqrt{2}} 
\end{align*} 
by a straightforward calculation. 
\end{proof} 

For each $n \in \mathbb{Z}_+$ consider the following sequence of 
(smooth) initial vorticities with compact support 
\begin{equation} \label{eq:omega-seq} 
\omega_{0,n}(x) = \omega_0(x) + \beta_n(x). 
\end{equation} 
Note that from \eqref{eq:n2} and Lemma \ref{lem:omega-0} it follows that $\omega_{0,n}$ belongs to 
$W^{1,p}$ for any $n$ in $\mathbb{Z}_+$. 
Let $\omega_n(t)$ be the corresponding (smooth) solutions of the vorticity equations 
\eqref{eq:euler-v}-\eqref{eq:euler-vic}. 

We now come to the crucial step in our construction. 
For each $n \in \mathbb{Z}_+$ let $\eta_n(t)$ be the flow of volume-preserving diffeomorphisms 
of the associated velocity fields $u_n = \nabla^\perp\Delta^{-1}\omega_n$ 
as in \eqref{eq:flow}-\eqref{eq:flow-ic}. 
Assume that the data-to-solution map for the Euler equations is continuous from bounded sets 
in $C^1(\mathbb{R}^2)$ to $C([0,1], C^1(\mathbb{R}^2))$. 
It then follows from \eqref{eq:omega-seq} and \eqref{eq:n1} that 
\begin{equation} \label{eq:A} 
\sup_{0 \leq t \leq 1}\| D\Delta^{-1} \nabla^\perp ( \omega_n(t) - \omega(t) ) \|_\infty 
\longrightarrow 0 
\quad 
\text{as} 
\quad 
n \to \infty 
\end{equation} 
as well as 
\begin{equation} \label{eq:T} 
\sup_{0 \leq t \leq 1} \| \Delta^{-1} \nabla^\perp(\omega_n(t) - \omega(t)) \|_\infty 
\longrightarrow 0 
\quad 
\text{as} 
\quad 
n \to 
\infty. 
\end{equation} 
(cf. also Thm. 2.12; inequality (2.21) of \cite{TTY}). 
Applying the comparison Lemma \ref{lem:comp} we then find 
\begin{align} \label{eq:LL} 
\sup_{0\leq t \leq 1} \big( 
\| \eta_n(t) - \eta(t) \|_\infty + \| D\eta_n(t) - D\eta(t) \|_\infty 
\big) 
= 
\theta_n 
\longrightarrow 0 
\quad 
\text{as} 
\; 
n \to \infty 
\end{align} 
where $\eta(t)$ is the flow of the velocity field $u=\nabla^\perp\Delta^{-1}\omega$ 
with the initial vorticity $\omega_0$ given by \eqref{eq:iv} as in Proposition \ref{prop:Lag}. 

Using conservation of vorticity, formula \eqref{eq:SGr} of Remark \ref{rem:SGr} and 
the invariance of the $L^p$ norms under volume-preserving Lagrangian flows $\eta_n(t)$ 
(change of variables) we have 
\begin{align} \nonumber 
\qquad 
\| \omega_n(t_0) \|_{W^{1,p}} 
&\geq 
\| \nabla( \omega_{0,n}\circ\eta_n^{-1}(t_0)) \|_{L^p} 
\simeq 
\\  \nonumber 
&\hskip -2.8cm 
\| d\omega_{0,n}{\circ}\eta_n^{-1}(t_0) (\nabla^\perp\eta_{n,2}(t_0) {\circ}\eta_n^{-1}(t_0)) \|_{L^p} 
{+} 
\| d\omega_{0,n}{\circ}\eta_n^{-1}(t_0) (\nabla^\perp\eta_{n,1}(t_0){\circ}\eta_n^{-1}(t_0)) \|_{L^p} 
\\  \label{eq:BB} 
&\simeq 
\| d\omega_{0,n} (\nabla^\perp\eta_{n,2}(t_0)) \|_{L^p} 
+ 
\| d\omega_{0,n} (\nabla^\perp\eta_{n,1}(t_0)) \|_{L^p} 
\\ \nonumber 
&\gtrsim 
\| d\omega_{0,n} ( \nabla^\perp\eta_{n,2}(t_0)) \|_{L^p}. 
\end{align} 
Since from the comparison estimate \eqref{eq:LL} we have 
$$ 
\| d\omega_{0,n}( \nabla^\perp\eta_2 - \nabla^\perp\eta_{n,2})(t_0)\|_{L^p} 
\lesssim 
\| D( \eta_2 - \eta_{n,2} )(t_0)\|_\infty \|\nabla\omega_{0,n}\|_{L^p} 
\leq 
\theta_n \|\nabla\omega_{0,n}\|_{L^p} 
$$ 
applying the triangle inequality and \eqref{eq:omega-seq} we can further bound 
the right side of the expression in \eqref{eq:BB} below by 
\begin{align} \label{eq:MT}  
&\| d\omega_{0,n} (\nabla^\perp\eta_2(t_0)) \|_{L^p} 
- 
\theta_n \| \nabla\omega_{0,n}\|_{L^p} 
\\  \nonumber 
&\hskip 1cm \gtrsim 
\| d\beta_n (\nabla^\perp\eta_2(t_0)) \|_{L^p} 
- 
\| d\omega_0(\nabla^\perp\eta_2(t_0))\|_{L^p} 
- 
\theta_n \| \nabla\omega_{0,n}\|_{L^p}. 
\end{align} 
Observe that by the assumption \eqref{eq:assump} we can bound the middle term 
on the right side of \eqref{eq:MT} as in \eqref{eq:BB} above by 
\begin{align} \nonumber 
\| d\omega_0( \nabla^\perp\eta_2(t_0)) \|_{L^p} 
&\leq 
\| \nabla\omega_0\circ\eta^{-1}(t_0) \cdot D\eta^{-1}(t_0) \|_{L^p} 
\\  \label{eq:WW} 
&\simeq 
\| \nabla( \omega_0\circ\eta^{-1} (t_0) )\|_{L^p} 
\leq 
\| \omega(t_0) \|_{W^{1,p}} 
\leq M^{1/3}. 
\end{align} 

It therefore remains to find a lower bound on the $\beta$-term in \eqref{eq:MT}. 
This however follows from the the two estimates in Lemma \ref{lem:remrem}. 
Namely, we have 
\begin{align}  
\| d\beta_n( \nabla^\perp\eta_2(t_0)) \|_{L^p} 
&= 
\big\| -\partial_1\beta_n \partial_2\eta_2(t_0) + \partial_2\beta_n \partial_1\eta_2(t_0) \big\|_{L^p} 
\nonumber 
\\ \nonumber 
&\geq 
\| \partial_1\beta_n \partial_2\eta_2(t_0)\|_{L^p} 
- 
\| \partial_2 \beta_n \partial_1\eta_2(t_0)\|_{L^p} 
\\   \label{eq:bBeta} 
&\gtrsim 
M 
- 
\frac{1}{n} \| \partial_2\eta_2(t_0)\|_{L^\infty(B(x^*,2))} 
- 
\frac{1}{n} \| \partial_1\eta_2(t_0)\|_{L^\infty(\cup B(x^*_\varepsilon,2))} 
\\  \nonumber 
&\gtrsim 
M 
\end{align} 
provided that $n$ is sufficiently large. 
Combining \eqref{eq:BB}, \eqref{eq:MT}, \eqref{eq:WW} and \eqref{eq:bBeta} 
completes the proof of Theorem \ref{thm:1}.

%%%%%%%%%%%%%%%%%%%%%%%%%%%%%%%%%%
\bibliographystyle{amsplain}

\end{document}